\theoremstyle{plain}
\newtheorem{thm}{\protect\theoremname}[section]
  \theoremstyle{plain}
  \newtheorem{conjecture}[thm]{\protect\conjecturename}
  \theoremstyle{definition}
  \newtheorem{defn}[thm]{\protect\definitionname}
  \theoremstyle{plain}
  \newtheorem{prop}[thm]{\protect\propositionname}
  \theoremstyle{plain}
  \newtheorem{cor}[thm]{\protect\corollaryname}
  \theoremstyle{remark}
  \newtheorem{rem}[thm]{\protect\remarkname}
  \theoremstyle{plain}
  \newtheorem{lem}[thm]{\protect\lemmaname}
  \theoremstyle{definition}
  \newtheorem{example}[thm]{\protect\examplename}
\date{}
  \providecommand{\conjecturename}{Conjecture}
  \providecommand{\corollaryname}{Corollary}
  \providecommand{\definitionname}{Definition}
  \providecommand{\examplename}{Example}
  \providecommand{\lemmaname}{Lemma}
  \providecommand{\propositionname}{Proposition}
  \providecommand{\remarkname}{Remark}
\providecommand{\theoremname}{Theorem}
\begin{document}

\title{Toric Polynomial Generators of Complex Cobordism}

\author{Andrew Wilfong}

\date{\today}
\begin{abstract}
Although it is well-known that the complex cobordism ring is a polynomial
ring $\Omega_{*}^{U}\cong\mathbb{Z}\left[\alpha_{1},\alpha_{2},\ldots\right]$,
an explicit description for convenient generators $\alpha_{1},\alpha_{2},\ldots$
has proven to be quite elusive. The focus of the following is to construct
complex cobordism polynomial generators in many dimensions using smooth
projective toric varieties. These generators are very convenient objects
since they are smooth connected algebraic varieties with an underlying
combinatorial structure that aids in various computations. By applying
certain torus-equivariant blow-ups to a special class of smooth projective
toric varieties, such generators can be constructed in every complex
dimension that is odd or one less than a prime power. A large amount
of evidence suggests that smooth projective toric varieties can serve
as polynomial generators in the remaining dimensions as well.
\end{abstract}

\keywords{cobordism, toric variety, blow-up, fan}

\subjclass[2000]{57R77, 14M25, 52B20}

\maketitle

\section{Introduction}

In 1960, Milnor and Novikov independently showed that the complex
cobordism ring $\Omega_{*}^{U}$ is isomorphic to the polynomial ring
$\mathbb{Z}\left[\alpha_{1},\alpha_{2},\ldots\right]$, where $\alpha_{n}$
has complex dimension $n$ \cite{Novikov1960,Thom1995}. The standard
method for choosing generators $\alpha_{n}$ involves taking products
and disjoint unions of complex projective spaces and Milnor hypersurfaces
$\mathcal{H}_{i,j}\subset\mathbb{C}P^{i}\times\mathbb{C}P^{j}$. This
method provides a smooth algebraic \emph{not necessarily connected
}variety in each even real dimension whose cobordism class can be
chosen as a polynomial generator of $\Omega_{*}^{U}$. Replacing the
disjoint unions with connected sums give other choices for polynomial
generators. However, the operation of connected sum does not preserve
algebraicity, so this operation results in a smooth connected \emph{not
necessarily algebraic }manifold as a complex cobordism generator in
each dimension.

Since Milnor and Novikov's original construction, others have searched
for more convenient choices for generators of $\Omega_{*}^{U}$. For
example, Buchstaber and Ray provided an alternate construction of
polynomial generators in 1998 \cite{Buchstaber1998,Buchstaber2001}.
They described certain smooth projective toric varieties which multiplicatively
generate $\Omega_{*}^{U}$. As a consequence, disjoint unions of these
toric varieties can be chosen as polynomial generators. Taking connected
sums instead allows one to choose a convenient topological generalization
of a toric variety called a \emph{quasitoric manifold }as a generator
in each dimension. The advantage of these quasitoric generators is
that they have a convenient combinatorial structure that aids in many
computations. However, this technique still only provides examples
of generators that are connected or algebraic, but not both in general.

Several years later, Johnston took a drastically different approach
to constructing polynomial generators of complex cobordism which resulted
in the discovery of generators that are simultaneously connected and
algebraic \cite{Johnston2004}. More specifically, Johnston's construction
involves taking a sequence of blow-ups of hypersurfaces and complete
intersections in smooth projective algebraic varieties, starting with
complex projective space. By tracking the change of a certain cobordism
invariant called the Milnor genus, Johnston proved that every complex
cobordism polynomial ring generator $\alpha_{n}$ can be represented
by a smooth projective connected variety.

The purpose of the following is to apply techniques similar to those
of Johnston to search for even more convenient choices for complex
cobordism polynomial generators, namely smooth projective \emph{toric
}varieties. Not only are these connected and algebraic like Johnston's
generators, but they also display the computationally-convenient combinatorial
characteristics of Buchstaber and Ray's quasitoric generators.
\begin{conjecture}
\label{maincon}For each $n\ge1$, there exists a smooth projective
toric variety whose cobordism class can be chosen for the polynomial
generator $\alpha_{n}$ of $\Omega_{*}^{U}\cong\mathbb{Z}\left[\alpha_{1},\alpha_{2},\ldots\right]$.
\end{conjecture}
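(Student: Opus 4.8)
The plan is to reduce the conjecture to a single numerical criterion and then to realize the required value of that invariant using torus-equivariant blow-ups of explicit toric varieties. Recall the Milnor number $s_n[M]$, the characteristic number of a stably complex $M^{2n}$ obtained by evaluating the Newton polynomial $s_n(c_1,\dots,c_n)=\sum_i x_i^{\,n}$ in the Chern roots $x_i$ of $TM$ on the fundamental class. By the Milnor--Novikov theorem, a class $[M]\in\Omega^U_{2n}$ may be chosen as the polynomial generator $\alpha_n$ \emph{if and only if}
\[
s_n[M]=\begin{cases}\pm 1,& n+1\text{ not a prime power,}\\ \pm p,& n+1=p^{k},\end{cases}
\]
and the set of values $s_n[M]$, as $M$ ranges over all $2n$-manifolds, is exactly $m\mathbb Z$ where $m$ is the number on the right. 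Thus the conjecture is equivalent to: \emph{for each $n$ there is a smooth projective toric variety $X^n$ with $s_n[X]=\pm m$.} First I would record that $s_n$ is additive and vanishes on products of positive-dimensional factors, and that $s_n[\mathbb CP^n]=n+1$, giving a toric starting point in every dimension.

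Next I would set up the toric machinery. A smooth projective toric variety corresponds to a smooth complete projective fan, and a blow-up along a torus-invariant smooth center is realized combinatorially by a star subdivision, which again yields a smooth projective toric variety, so the entire construction stays within our class. The main computational input is a formula for the change of $s_n$ under such a blow-up. For a fixed point the exceptional divisor is $\mathbb CP^{n-1}$, and a direct computation using $\mathrm{Bl}_p\mathbb CP^n=\mathbb P(\mathcal O\oplus\mathcal O(1))\to\mathbb CP^{n-1}$ gives
\[
s_n[\mathrm{Bl}_p\mathbb CP^n]=(-1)^{n-1}+1=\begin{cases}2,& n\text{ odd,}\\ 0,& n\text{ even.}\end{cases}
\]
Since blowing up a point is a local modification, the resulting increment $s_n[\mathrm{Bl}_pX]-s_n[X]$ is the universal constant $1-n$ for $n$ odd and $-(n+1)$ for $n$ even, independent of $X$. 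For a positive-dimensional invariant center $Z$ (itself a lower-dimensional toric variety) the exceptional divisor is a projective bundle over $Z$, and the same projective-bundle/Segre-class calculus produces a family of increments computable from the fan; the key point I would establish is that the greatest common divisor of the attainable increments equals $\gcd_{1\le i\le n}\binom{n+1}{i}=m$, matching the Milnor-hypersurface values $s_n[\mathcal H_{i,j}]=-\binom{n+1}{i}$ that the non-toric classical construction exploits.

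With this toolkit the accessible cases fall out. If $n$ is odd and $n+1=2^{k}$, then $\mathrm{Bl}_p\mathbb CP^n$ already has $s_n=2=p$, and we are done. If $n$ is odd and $n+1$ is not a prime power, then $\mathrm{Bl}_p\mathbb CP^n$ supplies the even value $2$, and blowing up suitable positive-dimensional invariant centers supplies increments realizing the odd binomial coefficients $\binom{n+1}{i}$; since $\gcd=1$ and an even value is already in hand, I would assemble a single \emph{connected} toric variety with $s_n=\pm 1$ by performing these blow-ups in succession. If $n+1=p^{k}$ with $n$ even, point blow-ups only move $s_n$ within $p^{k}\mathbb Z$ and are useless, so the construction must use higher-codimension centers whose increments are all divisible by $p$ with gcd exactly $p$; combining them lands on $\pm p$.

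The hard part, and the reason the statement is posed only as a conjecture, is the remaining case: $n$ even with $n+1$ not a prime power, where the target is $s_n=\pm 1$. Here the point-blow-up increment $-(n+1)$ is a nonzero multiple of $n+1$ and contributes nothing toward reaching $1$, so \emph{every} adjustment must come from higher-dimensional invariant centers. Although $\gcd_{1\le i\le n}\binom{n+1}{i}=1$ guarantees that $1$ lies in the additive semigroup generated by the available increments, realizing it \emph{exactly}, and on one connected toric variety, requires producing invariant centers and controlling their normal data precisely enough to force the delicate integer combination. I do not expect a uniform such construction to emerge from these methods; the support for the general even, non-prime-power case is instead computational, verifying the conjecture torically in low dimensions, which is exactly why it remains open.
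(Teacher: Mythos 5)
Your overall strategy---reduce to the Milnor genus criterion, start from an explicit smooth projective toric variety, and adjust $s_n$ by torus-equivariant blow-ups, deferring the even, non-prime-power case to computation---has the same shape as the paper's, and your scoping of which cases are reachable matches the paper's coverage (odd $n$; $n+1$ a prime power; the rest conjectural). Your point-blow-up increment is correct, and your observation that $\mathrm{Bl}_{p}\mathbb{C}P^{n}$ already realizes $\alpha_{n}$ when $n+1=2^{m}$ is valid and even simpler than the paper's choice $Y_{n}^{n-1}\left(1,1\right)$. However, there are genuine gaps in the other cases you claim to settle.

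Everything hinges on your assertion that blow-ups along positive-dimensional invariant centers supply increments whose gcd equals $\gcd_{i}\binom{n+1}{i}$. You never establish this, and it is the crux: the increment from blowing up an invariant center depends on the normal data of that center inside the \emph{current} ambient variety, which changes after each blow-up, and there is no a priori reason that the exact values $\binom{n+1}{i}$ (or any set with the right gcd) are realized by torus-invariant centers. Even granting the gcd claim, blow-ups can only be performed, never reversed, so you must hit the target by a \emph{non-negative} integer combination of available increments on one connected variety; that needs an argument about signs and magnitudes which you do not supply. The paper sidesteps all of this by starting not from $\mathbb{C}P^{n}$ but from a two-parameter family $Y_{n}^{\varepsilon}\left(a,b\right)$ (a stack of two projectivized bundles) with $s_{n}=a^{\varepsilon}bR_{n}\left(\varepsilon\right)$ where $R_{n}\left(\varepsilon\right)=n-\varepsilon+\left(-1\right)^{\varepsilon}\binom{n-1}{\varepsilon}$ (Proposition \ref{sn}); the free integers $a,b$ let one land in any desired residue class modulo $n\pm1$ directly, after which point blow-ups finish the job. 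The remaining content is then number-theoretic: for even $n=p^{m}-1$ one must prove $R_{n}\left(p^{m-1}\right)\equiv-p\bmod p^{m}$ and $R_{n}\left(p^{m-1}\right)<0$ (Lemma \ref{num}), and for odd $n$ one needs a Lucas-theorem computation showing $R_{n}\left(2^{m}\right)$ is odd together with a single extra blow-up of an invariant curve contributing exactly $2b$ (Lemma \ref{blowup}) to invert modulo $n-1$. Your proposal contains no analogue of these congruence verifications, so the claims that your higher-codimension blow-ups ``land on $\pm p$'' or assemble to $\pm1$ are unsupported precisely where the paper does its real work.
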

Taking torus-equivariant blow-ups of certain smooth projective toric
varieties will provide examples of such generators in most dimensions.
More specifically,
\begin{thm}
If $n$ is odd or $n$ is one less than a power of a prime, then the
cobordism class of a smooth projective toric variety can be chosen
for the complex cobordism ring polynomial generator of complex dimension
$n$.
\end{thm}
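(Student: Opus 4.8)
The plan is to reduce everything to Milnor's characteristic-number criterion and then to realize the required number by torus-equivariant blow-ups. Recall that the indecomposable quotient of $\Omega^U_{2n}$ is detected by the Milnor number $s_n[M] = \langle s_n(c_1,\dots,c_n),[M]\rangle$, and that a complex $n$-fold $M$ represents a polynomial generator $\alpha_n$ precisely when $s_n[M] = \pm d_n$, where $d_n = p$ if $n+1 = p^k$ is a prime power and $d_n = 1$ otherwise. Since $s_n$ is additive over disjoint unions and vanishes on decomposables, only the indecomposable part matters. The two hypotheses split into three arithmetic targets: $s_n = \pm 2$ when $n+1 = 2^k$ (forcing $n$ odd), $s_n = \pm p$ when $n+1 = p^k$ with $p$ odd (forcing $n$ even), and $s_n = \pm 1$ when $n$ is odd with $n+1$ not a prime power. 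Throughout I would use that blowing up a smooth projective toric variety along a smooth torus-invariant center yields again a smooth projective toric variety and leaves it connected, so that the entire construction stays inside the desired class.

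First I would assemble the toric building blocks together with their Milnor numbers. From $c(\mathbb{C}P^n) = (1+x)^{n+1}$ one gets $s_n[\mathbb{C}P^n] = n+1$. The key local move is the blow-up at a torus-fixed point: since $\mathrm{Bl}_{pt}\mathbb{C}P^n \cong \mathbb{P}(\mathcal{O}\oplus\mathcal{O}(1))$ is a $\mathbb{C}P^1$-bundle over $\mathbb{C}P^{n-1}$, a direct Chern-root computation gives $s_n[\mathrm{Bl}_{pt}\mathbb{C}P^n] = 1-(-1)^n$, so a point blow-up shifts the Milnor number by the universal constant $\delta_n = -(n+(-1)^n)$, independent of the ambient variety. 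Finally, the Milnor hypersurfaces $H_{i,j}\subset\mathbb{C}P^i\times\mathbb{C}P^j$ are themselves smooth projective toric varieties (projectivizations of split bundles over $\mathbb{C}P^i$), with $s_n[H_{i,j}] = -\binom{n+1}{i}$ for $i+j = n+1$.

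The odd case with $n+1 = 2^k$ is then immediate: for odd $n$ we have $s_n[\mathrm{Bl}_{pt}\mathbb{C}P^n] = 2 = d_n$, so a single point blow-up of projective space is already a toric generator. For the remaining cases I would invoke the classical identity $\gcd_{1\le i\le n}\binom{n+1}{i} = d_n$. Together with $s_n[\mathbb{C}P^n] = n+1 = \binom{n+1}{1}$, this shows that the subgroup of $\mathbb{Z}$ generated by the Milnor numbers of the toric blocks above is exactly $d_n\mathbb{Z}$; Kummer's theorem pins down an $i$ with $v_p\binom{n+1}{i} = 1$ in the prime-power case, and (when $n+1$ is not a power of $2$) an $i$ with $\binom{n+1}{i}$ odd, which together with the even value $2$ produces $\gcd 1$ in the odd case. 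Thus in every case some $\mathbb{Z}$-combination of the block Milnor numbers equals the target $\pm d_n$.

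The heart of the argument, and the step I expect to be the main obstacle, is to realize such a $\mathbb{Z}$-combination as the Milnor number of a single connected toric variety, rather than of a disjoint union or a connected sum that would leave the algebraic or combinatorial category. The mechanism is to perform a sequence of torus-equivariant blow-ups of $\mathbb{C}P^n$ along smooth torus-invariant centers --- fixed points and invariant linear subspaces $\mathbb{C}P^m$ --- and to track the increment each blow-up contributes to $s_n$ via the projectivized-normal-bundle formula: the point blow-up contributes $\delta_n$, while blow-ups along the higher-dimensional invariant centers contribute the $\binom{n+1}{i}$-type increments matching the hypersurface values. The delicate part is twofold: verifying that these centers are simultaneously smooth, torus-invariant, and compatible with projectivity so the toric structure is preserved at every stage, and performing the arithmetic of the increments so as to land on \emph{exactly} $\pm d_n$ (not merely the correct residue class), using that the increments generate $d_n\mathbb{Z}$ and that the corrections may be taken of both signs. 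Once the bookkeeping yields one blown-up toric variety with $s_n = \pm d_n$, Milnor's criterion identifies its cobordism class with $\alpha_n$, completing the proof.
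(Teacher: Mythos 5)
Your reduction to the Milnor criterion, your point--blow-up formula $s_n[\mathrm{Bl}_{pt}M]=s_n[M]-(n+(-1)^n)$, and your observation that $\mathrm{Bl}_{pt}\mathbb{C}P^{n}$ already has $s_n=2$ when $n$ is odd (so it settles $n=2^k-1$ even more simply than the paper, which uses a two-stage projective bundle there) are all correct. But the argument has a genuine gap exactly where you flag ``the main obstacle,'' and two of your inputs are wrong. First, the Milnor hypersurfaces $H_{i,j}$ with $i,j\ge 2$ are \emph{not} toric: $H_{i,j}$ is the projectivization of $\eta^{\perp}\oplus\underline{\mathbb{C}}^{\,j-i}$ over $\mathbb{C}P^{i}$, and $\eta^{\perp}$ does not split for $i\ge 2$; indeed Buchstaber and Ray showed $H_{2,2}$ admits no quasitoric structure at all. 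So your proposed toric building blocks with Milnor numbers $-\binom{n+1}{i}$ do not exist in the toric category. Second, and more seriously, the assertion that blow-ups of $\mathbb{C}P^{n}$ along torus-invariant linear subspaces contribute increments ``matching the hypersurface values'' $\pm\binom{n+1}{i}$ is never verified, and it is not true in that simple form; the increment for a higher-dimensional center depends on the normal bundle and is a more complicated characteristic-number expression. Since point blow-ups only shift $s_n$ by multiples of $n+(-1)^n$, everything hinges on producing a \emph{starting} toric variety whose Milnor genus lies in the correct residue class modulo $n+(-1)^n$ with the correct sign, and your proposal supplies no such variety outside the $2^k-1$ case.

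This is precisely the content the paper supplies. It constructs explicit smooth projective toric varieties $Y_{n}^{\varepsilon}(a,b)$ (iterated projectivizations with $n+3$ rays) whose Milnor genus is the closed form $a^{\varepsilon}b\,R_{n}(\varepsilon)$ with $R_{n}(\varepsilon)=n-\varepsilon+(-1)^{\varepsilon}\binom{n-1}{\varepsilon}$; it then proves a congruence lemma $R_{n}(p^{m-1})\equiv -p \bmod p^{m}$ together with $R_{n}(p^{m-1})<0$ to handle $n=p^{m}-1$, and for general odd $n$ it combines a Lucas-theorem parity computation of $R_{n}(2^{m})$, a number-theoretic choice of $a$ and $b$ modulo $n-1$, and a blow-up along a specific two-dimensional invariant subvariety that shifts $s_n$ by exactly $2b$, to land in the residue class of $1$ modulo $n-1$ with positive value before finishing with point blow-ups. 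Your outline identifies the right target arithmetic (including the $\gcd$ of binomial coefficients) but does not construct the toric varieties that realize it, so as written the proof is incomplete.
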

It seems very likely that generators can be found in the remaining
even dimensions as well using a similar strategy. In fact, this would
be a consequence of a certain number theory conjecture. Although this
conjecture has not yet been verified, there is a significant amount
of numerical evidence that supports it.
\begin{thm}
If $n\le100\,001$, then the cobordism class of a smooth projective
toric variety can be chosen for the complex cobordism ring polynomial
generator of complex dimension $n$.
\end{thm}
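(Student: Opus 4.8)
The plan is to reduce the statement to a finite, purely arithmetic verification. First I recall the Milnor--Novikov criterion: a cobordism class $[M]$ of complex dimension $n$ may be chosen as the polynomial generator $\alpha_{n}$ if and only if the Milnor genus $s_{n}[M]$ equals $\pm1$ when $n+1$ is not a prime power, and equals $\pm p$ when $n+1=p^{k}$ for a prime $p$. The preceding theorem already supplies a smooth projective toric representative whenever $n$ is odd or $n+1$ is a prime power, so it remains only to treat the even integers $n\le100\,001$ for which $n+1$ is not a prime power; for each such $n$ the target value is $s_{n}=\pm1$.

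Next I would set up the realization mechanism. Starting from a base smooth projective toric variety whose Milnor genus is understood --- products of projective spaces, which have $s_{n}=0$ because $s_{n}$ is primitive and hence vanishes on products of positive-dimensional varieties, together with toric projective bundles that contribute nonzero values --- I apply a sequence of torus-equivariant blow-ups along torus-invariant centers. Such a blow-up is again smooth, projective, and toric, so the output is always of the required type. The essential input is the blow-up formula for $s_{n}$: blowing up a torus-invariant center of a given dimension changes $s_{n}$ by an explicit integer, readable off the fan, that is a signed combination of binomial coefficients $\binom{n+1}{j}$. Consequently the Milnor genera realizable by toric varieties built this way form a coset of the subgroup of $\mathbb{Z}$ generated by the accessible binomial values, and reaching $s_{n}=\pm1$ is exactly the statement that $1$ lies in that subgroup, i.e. that a suitable family of accessible $\binom{n+1}{j}$ has greatest common divisor $1$. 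This is the arithmetic heart of the matter: for $n+1$ a prime power the gcd of all $\binom{n+1}{j}$ with $0<j<n+1$ equals that prime (which is why those cases match the criterion via the first theorem), whereas for $n+1$ not a prime power the gcd of the full set is $1$, and the number-theoretic conjecture asserts that gcd $1$ survives the restriction to the binomials actually accessible to the construction.

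With this reduction in hand the bounded statement becomes a finite check. For each even $n\le100\,001$ with $n+1$ not a prime power I would compute the accessible binomial coefficients, exhibit an explicit $\mathbb{Z}$-linear combination of them equal to $1$ (equivalently, confirm that their gcd is $1$), and record the corresponding sequence of torus-invariant blow-ups realizing $s_{n}=\pm1$. Since the test is uniform and arithmetic, it can be carried out by computer over the stated range, and the Milnor--Novikov criterion then certifies the resulting toric variety as a choice for $\alpha_{n}$.

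The main obstacle is not the geometry but the passage between it and the arithmetic. One must pin down precisely which binomial coefficients are accessible to the torus-equivariant blow-up construction --- this set is genuinely smaller than $\{\binom{n+1}{j}\}_{0<j<n+1}$, which is exactly why a conjecture is needed rather than an immediate gcd argument --- and then prove that \emph{every} value in the resulting subgroup is geometrically realizable using centers that fit inside an $n$-dimensional toric variety. Once this correspondence is nailed down, verifying the gcd condition for all even $n\le100\,001$ with $n+1$ not a prime power is routine, if computationally large, and the theorem follows.
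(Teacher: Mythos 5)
Your high-level strategy matches the paper's: invoke the Milnor--Novikov criterion, let the preceding theorem dispose of odd $n$ and of $n$ with $n+1$ a prime power, and handle the remaining even $n\le100\,001$ by starting from an explicit toric variety and applying torus-equivariant blow-ups until the Milnor genus reaches $\pm1$, with the solvability of the resulting arithmetic condition checked by computer. But the heart of the argument is exactly the part you defer, and as written it is a genuine gap rather than a routine verification. You never exhibit the base varieties or the blow-up formulas, so the arithmetic condition you propose to check --- that some family of ``accessible'' binomial coefficients $\binom{n+1}{j}$ has gcd $1$ --- is not actually defined; you concede as much when you say one must still ``pin down precisely which binomial coefficients are accessible'' and prove that every element of the resulting subgroup is geometrically realizable. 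The paper supplies precisely this missing content: the explicit smooth projective toric varieties $Y_{n}^{\varepsilon}(a,b)$ (stacks of two projectivized bundles) with $s_{n}\bigl[Y_{n}^{\varepsilon}(1,b)\bigr]=bR_{n}(\varepsilon)$ where $R_{n}(\varepsilon)=n-\varepsilon+(-1)^{\varepsilon}\binom{n-1}{\varepsilon}$; the condition $\gcd\bigl(R_{n}(\varepsilon),n+1\bigr)=1$ for some $\varepsilon\in\{2,\ldots,n-1\}$, verified by computer for all relevant $n\le100\,000$; the choice of $b$ as a modular inverse with sign forcing $bR_{n}(\varepsilon)>0$, so that $s_{n}\equiv1\bmod (n+1)$ and is positive; and the fact that each blow-up at a torus-fixed point subtracts exactly $n+1$ when $n$ is even. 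Note that the arithmetic invariant that actually arises is built from $\binom{n-1}{\varepsilon}$ reduced modulo $n+1$, not from a sub-family of the $\binom{n+1}{j}$, so your guessed reduction is not the one the construction delivers.

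A secondary error: the set of Milnor genera reachable by blow-ups from a fixed base is not a coset of a subgroup of $\mathbb{Z}$. Each blow-up subtracts a fixed quantity and cannot be undone, so from a given starting value you sweep out only a translate of a numerical semigroup. To reach $\pm1$ you must therefore control both the residue class and the sign and size of the starting Milnor genus; this is why the paper chooses the sign of $b$ so that $bR_{n}(\varepsilon)>0$ before subtracting multiples of $n+1$. Your claim that $s_{n}$ vanishes on products of projective spaces is correct but does not help here, since a base with $s_{n}=0$ cannot be moved into the residue class $1\bmod(n+1)$ by point blow-ups alone. Until you specify a concrete family of toric varieties with computable and controllable Milnor genus, the finite computation you describe cannot be formulated, and the theorem does not follow.
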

To prove these results, it is of course essential to know when a manifold
can be chosen to represent a polynomial generator of the complex cobordism
ring. Detecting polynomial generators of $\Omega_{*}^{U}$ involves
computing the value of a certain cobordism invariant.
\begin{defn}
Consider a stably complex manifold $M^{2n}$, and formally write it
Chern class as $c\left(M\right)=\prod\limits _{k=1}^{n}\left(1+x_{k}\right)$.
The \emph{Milnor genus }$s_{n}\left[M\right]$ of $M$ is the characteristic
number obtained by evaluating the cohomology class $s_{n}\left(c\left(M\right)\right)=\sum\limits _{k=1}^{n}x_{k}^{n}$
on the fundamental class of $M$, i.e.
\[
s_{n}\left[M\right]=\left\langle \sum_{k=1}^{n}x_{k}^{n},\mu_{M}\right\rangle \in\mathbb{Z}.
\]

\end{defn}
Milnor and Novikov proved that $\left[M^{2n}\right]$ can be chosen
for the polynomial generator $\alpha_{n}$ of $\Omega_{*}^{U}\cong\mathbb{Z}\left[\alpha_{1},\alpha_{2},\ldots\right]$
if and only if the following relation holds:
\begin{equation}
s_{n}\left[M^{2n}\right]=\begin{cases}
\pm1 & \mbox{ if }n+1\ne p^{m}\mbox{ for any prime }p\mbox{ and integer }m\\
\pm p & \mbox{ if }n+1=p^{m}\mbox{ for some prime }p\mbox{ and integer }m
\end{cases}\label{eq:gen}
\end{equation}
(see \cite{Stong1968} for details).

The focus of this paper is to construct smooth projective toric varieties
whose Milnor genera have the appropriate value in order for the variety
to be chosen for the polynomial generators $\alpha_{n}$ of complex
cobordism. Section 2 offers a brief introduction to toric varieties
and their pertinent topological properties. It also includes the construction
of certain smooth projective toric varieties $Y_{n}^{\varepsilon}\left(a,b\right)$
which are used in later sections to construct complex cobordism polynomial
generators. Section 3 proves the existence of smooth projective toric
variety polynomial generators in even complex dimensions one less
than a prime power. In Section 4, such generators are found in all
odd dimensions. In Section 5, the remaining unproven dimensions are
discussed. More specifically, a number-theoretic conjecture is presented
which is sufficient to verify the existence of smooth projective toric
variety polynomial generators in the remaining dimensions. Overwhelming
numerical evidence is given in support of this conjecture.

The established methods of Milnor, Novikov, Buchstaber, and Ray for
producing complex cobordism polynomial generators do not provide an
explicit universal description of generators, as their methods rely
on solving certain Diophantine equations. The techniques in this paper
and those of Johnston \cite{Johnston2004} still do not provide this
desirable universal description in most dimensions, since the constructions
involve finding a sequence of blow-ups of unspecified length. Section
6 discusses the possibility of finding a convenient, explicit description
of complex cobordism polynomial generators among smooth projective
toric varieties.

\section{Toric Varieties}

A \emph{toric variety} is a normal variety that contains the torus
as a dense open subset such that the action of the torus on itself
extends to an action on the entire variety. Remarkably, these varieties
are in one-to-one correspondence with objects from convex geometry
called fans. Therefore, studying the combinatorial properties of these
fans can reveal a great deal of information about the corresponding
toric varieties. See \cite{Fulton1993,Cox2011} for a more in-depth
treatment of toric varieties.
\begin{defn}
A \emph{(strongly convex rational polyhedral) cone }$\sigma$ spanned
by \emph{generating rays} $v_{1},\ldots,v_{m}\in\mathbb{Z}^{n}$ is
a set of points
\[
\sigma=\mbox{pos}\left(v_{1},\ldots,v_{m}\right)=\left\{ \sum\limits _{k=1}^{m}a_{k}v_{k}\in\mathbb{R}^{n}\Bigr|a_{k}\ge0\right\}
\]
such that $\sigma$ does not contain any lines passing through the
origin.

A \emph{fan }$\Delta$ in $\mathbb{R}^{n}$ is a set of cones in $\mathbb{R}^{n}$
such that each face of a cone in $\Delta$ also belongs to $\Delta$,
and the intersection of any two cones in $\Delta$ is a face of both
cones.

The one-dimensional cones of a fan are called its \emph{generating
rays}.
\end{defn}
A cone can be used to construct a $\mathbb{C}$-algebra which is the
coordinate ring of an affine toric variety. A fan can in turn be used
to construct an abstract toric variety. More specifically, if two
cones $\sigma_{1}$ and $\sigma_{2}$ of a fan intersect at a face
$\tau$, then the affine varieties $U_{\sigma_{1}}$ and $U_{\sigma_{2}}$
of the two cones can be glued together along the subvariety $U_{\tau}$
associated to $\tau$ to produce a toric variety associated to the
fan $\sigma_{1}\cup\sigma_{2}$. This construction demonstrates that
every fan defines a corresponding toric variety. In fact, the converse
is also true.
\begin{thm}
(\cite[Section 3.1]{Cox2011}) There is a bijective correspondence
between equivalence classes of fans in $\mathbb{R}^{n}$ under unimodular
transformations and isomorphism classes of complex $n$-dimensional
toric varieties.
\end{thm}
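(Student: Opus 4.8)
The plan is to establish the correspondence in both directions and then to match isomorphisms of varieties with lattice automorphisms. Throughout, write $N\cong\mathbb{Z}^{n}$ for the lattice of one-parameter subgroups (so that cones live in $N\otimes\mathbb{R}\cong\mathbb{R}^{n}$), $M=\mathrm{Hom}(N,\mathbb{Z})$ for the dual lattice of characters, and $T_{N}=\mathrm{Spec}\,\mathbb{C}[M]\cong(\mathbb{C}^{*})^{n}$ for the associated torus.

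First I would carry out the construction direction, from fans to varieties. Given a cone $\sigma$ in a fan $\Delta$, form the dual cone $\sigma^{\vee}\subset M\otimes\mathbb{R}$ together with the semigroup $S_{\sigma}=\sigma^{\vee}\cap M$. By Gordan's lemma $S_{\sigma}$ is finitely generated, so the semigroup algebra $\mathbb{C}[S_{\sigma}]$ is a finitely generated $\mathbb{C}$-algebra and $U_{\sigma}=\mathrm{Spec}\,\mathbb{C}[S_{\sigma}]$ is an affine variety; since $S_{\sigma}$ is saturated in $M$, the algebra is integrally closed and $U_{\sigma}$ is normal. When $\tau$ is a common face of $\sigma_{1}$ and $\sigma_{2}$, the inclusion of semigroups exhibits $U_{\tau}$ as a principal open subset of each $U_{\sigma_{i}}$, and these identifications satisfy the cocycle condition precisely because the intersection of any two cones of $\Delta$ is a common face. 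Gluing the charts $U_{\sigma}$ along the $U_{\tau}$ therefore yields a separated normal variety $X_{\Delta}$; the chart of the zero cone is the dense torus $T_{N}$, whose translation action extends compatibly over all charts, so $X_{\Delta}$ is a toric variety. A unimodular transformation is an automorphism of $N$, i.e.\ an element of $\mathrm{GL}_{n}(\mathbb{Z})$; it carries $\Delta$ to a fan $\Delta'$ and induces an isomorphism $X_{\Delta}\cong X_{\Delta'}$, so the assignment descends to a well-defined map from equivalence classes of fans to isomorphism classes of toric varieties.

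The hard part will be the converse: recovering a fan intrinsically from an abstract toric variety $X$ and proving that the two constructions are mutually inverse. The dense torus $T\subset X$ is intrinsic, so one recovers $N$ as its group of one-parameter subgroups. For $v\in N$ let $\lambda_{v}\colon\mathbb{C}^{*}\to T$ be the corresponding subgroup; analyzing when the limit $\lim_{t\to0}\lambda_{v}(t)$ exists in $X$, and recording the orbit into which it lands, one assembles the relevant $v$ into cones and verifies that these form a fan $\Delta_{X}$. The technical core is the \emph{orbit--cone correspondence}, an order-reversing bijection between $T$-orbits of $X$ and cones of $\Delta_{X}$, which identifies each affine chart of $X$ with some $U_{\sigma}$ and hence gives $X\cong X_{\Delta_{X}}$; here the normality and separatedness hypotheses are essential, as is the classification of normal affine toric varieties by their cones.

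Finally, I would upgrade this to a bijection on the stated classes. Any isomorphism $X_{\Delta_{1}}\cong X_{\Delta_{2}}$ restricts to an isomorphism of the intrinsic dense tori, inducing a lattice isomorphism $N_{1}\cong N_{2}$, i.e.\ a unimodular transformation, which must carry $\Delta_{1}$ onto $\Delta_{2}$ because each fan is recovered from its variety by the limit construction above. The main obstacle throughout is this intrinsic reconstruction and the verification of the orbit--cone correspondence, rather than the comparatively routine gluing and functoriality in the forward direction.
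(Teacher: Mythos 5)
Your sketch is correct and follows the standard argument that the paper itself points to: it gives no proof beyond the citation to Cox--Little--Schenck, noting only that ``this bijection can be proven by examining the orbits of a toric variety under the torus action,'' which is exactly the orbit--cone correspondence you place at the core of the converse direction. The forward construction via semigroup algebras and gluing, the recovery of the fan from limits of one-parameter subgroups, and the reduction of abstract isomorphisms to unimodular transformations of the lattice all match the cited treatment.
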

The fan corresponding to a variety $X$ will be denoted $\Delta_{X}$,
and the variety corresponding to a fan $\Delta$ will be denoted $X_{\Delta}$.
This bijection can be proven by examining the orbits of a toric variety
under the torus action. There is a bijective correspondence between
these orbits and the cones of the associated fan.
\begin{thm}
(\cite[Section 3.2]{Cox2011}) Consider a fan $\Delta$ in $\mathbb{R}^{n}$
and its associated complex dimension $n$ toric variety $X_{\Delta}$.
Every orbit of the torus action on $X_{\Delta}$ corresponds to a
distinct cone in $\Delta$. If such an orbit is a $k$-dimensional
torus, then the corresponding cone will have dimension $n-k$.
\end{thm}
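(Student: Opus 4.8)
The plan is to reduce the statement to a local computation on each affine chart $U_{\sigma}$ and then patch the resulting orbit decompositions together using the gluing maps of the fan. Writing $M=\operatorname{Hom}\left(\mathbb{Z}^{n},\mathbb{Z}\right)$ for the dual lattice and $\sigma^{\vee}=\left\{ u\in\left(\mathbb{R}^{n}\right)^{*}\mid\left\langle u,v\right\rangle \ge0\mbox{ for all }v\in\sigma\right\} $ for the dual cone, the coordinate ring of $U_{\sigma}$ is the semigroup algebra $\mathbb{C}\left[S_{\sigma}\right]$ with $S_{\sigma}=\sigma^{\vee}\cap M$. Points of $U_{\sigma}$ are then identified with semigroup homomorphisms $\gamma\colon S_{\sigma}\to\left(\mathbb{C},\cdot\right)$ sending $0\mapsto1$, and the torus $T=\operatorname{Hom}\left(M,\mathbb{C}^{*}\right)$ acts by $\left(t\cdot\gamma\right)\left(m\right)=t\left(m\right)\gamma\left(m\right)$. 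The whole argument amounts to classifying these homomorphisms up to the $T$-action.

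For each face $\tau\preceq\sigma$ I would introduce the distinguished point $x_{\tau}$ given by the homomorphism $m\mapsto1$ when $m\in\tau^{\perp}\cap M$ and $m\mapsto0$ otherwise; one first checks that this is a genuine semigroup homomorphism, which rests on the fact that $S_{\sigma}\cap\tau^{\perp}$ is a face of the semigroup $S_{\sigma}$ (that is, $m+m'\in\tau^{\perp}$ with $m,m'\in S_{\sigma}$ forces both $m,m'\in\tau^{\perp}$). A direct computation then shows that the orbit $O_{\tau}=T\cdot x_{\tau}$ is isomorphic to $\operatorname{Hom}\left(\tau^{\perp}\cap M,\mathbb{C}^{*}\right)$, a torus whose dimension equals the rank of $\tau^{\perp}\cap M$, namely $n-\dim\tau$. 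This already yields the dimension count asserted in the theorem.

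Next I would establish the orbit decomposition $U_{\sigma}=\bigsqcup_{\tau\preceq\sigma}O_{\tau}$. Given any point $\gamma\in U_{\sigma}$, its support $\left\{ m\in S_{\sigma}\mid\gamma\left(m\right)\ne0\right\} $ is a face of the semigroup $S_{\sigma}$, hence equals $S_{\sigma}\cap\tau^{\perp}$ for a unique face $\tau\preceq\sigma$; rescaling by a suitable $t\in T$ moves $\gamma$ onto $x_{\tau}$, so $\gamma\in O_{\tau}$, and uniqueness of $\tau$ follows because the support is an orbit invariant. To globalize, I would check that the affine charts are glued $T$-equivariantly along their common faces, so that a cone $\tau$ shared by $\sigma_{1}$ and $\sigma_{2}$ contributes the same orbit in $U_{\sigma_{1}}$ and $U_{\sigma_{2}}$. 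Assembling the local decompositions then produces $X_{\Delta}=\bigsqcup_{\sigma\in\Delta}O_{\sigma}$, a bijection $\sigma\leftrightarrow O_{\sigma}$ with $\dim O_{\sigma}=n-\dim\sigma$.

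The main obstacle is the face correspondence underlying these steps: the precise claim that the faces of the cone $\sigma$ stand in an inclusion-reversing bijection with the faces of the semigroup $S_{\sigma}$, with orthogonal complements interchanging them correctly. This is where the convex-geometric content lives, and it relies on supporting-hyperplane and duality arguments, in particular $\sigma^{\vee\vee}=\sigma$ and the identification of the faces of $\sigma^{\vee}$ as the sets $\sigma^{\vee}\cap\tau^{\perp}$ for faces $\tau\preceq\sigma$. Once this dictionary between the combinatorics of $\sigma$ and the algebra of $S_{\sigma}$ is in place, the identification of orbits with distinguished points and the dimension formula are both immediate.
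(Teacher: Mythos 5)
The paper offers no proof of this statement at all --- it is quoted verbatim as background, with the proof deferred to \cite[Section 3.2]{Cox2011} --- so the only meaningful comparison is with that cited source, and your outline is precisely the standard orbit--cone correspondence argument given there: distinguished points $x_{\tau}$, classification of points of $U_{\sigma}$ by the supports of their semigroup homomorphisms, the local decomposition $U_{\sigma}=\bigsqcup_{\tau\preceq\sigma}O_{\tau}$, and $T$-equivariant gluing. The outline is correct, and you have rightly isolated the one step carrying real content (that supports are semigroup faces and that these are exactly the sets $S_{\sigma}\cap\tau^{\perp}$, via the duality $\sigma^{\vee\vee}=\sigma$ and the face correspondence between $\sigma$ and $\sigma^{\vee}$); the only other point worth making explicit is that moving $\gamma$ onto $x_{\tau}$ by some $t\in T$ uses that $S_{\sigma}\cap\tau^{\perp}$ generates the direct summand $\tau^{\perp}\cap M$ of $M$, so every $\mathbb{C}^{*}$-valued semigroup homomorphism on it extends to a character of $M$.
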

As a result of this correspondence between fans and toric varieties,
many of the algebraic properties of toric varieties directly correspond
to properties of the associated fans.
\begin{prop}
(\cite{Kleinschmidt1988}) Consider a fan $\Delta$ in $\mathbb{R}^{n}$.

The toric variety $X_{\Delta}$ is compact if and only if $\Delta$
is a complete fan, i.e. the union of all of the cones in $\Delta$
is $\mathbb{R}^{n}$ itself.

The variety is smooth if and only if $\Delta$ is regular, i.e. every
maximal $n$-dimensional cone is spanned by $n$ generating rays that
form an integer basis.

The variety $X_{\Delta}$ is isomorphic to the variety $X_{\Delta'}$
if and only if there is a unimodular transformation $\mathbb{Z}^{n}\rightarrow\mathbb{Z}^{n}$
which maps $\Delta$ into $\Delta'$ and preserves the simplicial
structure of the fans.

The variety $X_{\Delta}$ is projective if and only if $\Delta$ is
normal to a lattice polytope (see \cite[Section 5.1]{Buchstaber2002}
for details about polytopes and their relation to toric varieties).

\end{prop}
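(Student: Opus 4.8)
The plan is to establish each of the four equivalences separately, in every case translating a geometric property of $X_{\Delta}$ into a combinatorial property of $\Delta$ by means of the orbit--cone correspondence of the preceding theorem together with the affine charts $U_{\sigma}=\operatorname{Spec}\mathbb{C}\left[\sigma^{\vee}\cap M\right]$ attached to the cones $\sigma\in\Delta$, where $M=\operatorname{Hom}\left(\mathbb{Z}^{n},\mathbb{Z}\right)$ is the dual lattice and $\sigma^{\vee}$ the dual cone. Two of the four criteria (smoothness and isomorphism) are essentially local or formal, the compactness criterion rests on the valuative criterion of properness, and the projectivity criterion---where the real work lies---requires the dictionary between ample classes and strictly convex support functions.

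For smoothness I would argue chart by chart: since smoothness is Zariski-local it suffices to check each $U_{\sigma}$ for $\sigma$ maximal, and $U_{\sigma}$ is smooth exactly when the semigroup algebra $\mathbb{C}\left[\sigma^{\vee}\cap M\right]$ is a polynomial ring, which happens precisely when the generating rays of $\sigma$ extend to a $\mathbb{Z}$-basis---for a maximal $n$-dimensional cone, exactly the regularity condition. The only nontrivial direction, that smoothness forces the rays to form a basis, follows by computing the Zariski cotangent space at the torus-fixed point of $U_{\sigma}$. The isomorphism criterion is in effect a restatement of the bijective correspondence already recorded above: a unimodular map carrying $\Delta$ to $\Delta'$ transports each chart $U_{\sigma}$ isomorphically to $U_{\sigma'}$ compatibly with the gluing and hence yields $X_{\Delta}\cong X_{\Delta'}$, while conversely any torus-equivariant isomorphism restricts to an automorphism of the dense torus, that is, to a lattice automorphism matching the two fans.

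For compactness I would invoke the valuative criterion of properness, testing with the one-parameter subgroups $\lambda_{v}$ of the torus. The limit $\lim_{t\to0}\lambda_{v}\left(t\right)$ exists in $X_{\Delta}$ if and only if the lattice point $v$ lies in the support of some cone of $\Delta$; hence every such limit exists if and only if $\Delta$ is complete. Because over $\mathbb{C}$ it suffices to test properness along curves, this shows that $X_{\Delta}$ is proper if and only if $\Delta$ is complete, and properness of a finite-type $\mathbb{C}$-scheme is equivalent to compactness of its associated analytic space.

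I expect the projectivity criterion to be the main obstacle, since it is the only part that is neither local nor purely formal. The strategy is to represent an ample class by a torus-invariant Cartier divisor $D$, which corresponds to a support function $\varphi_{D}$ that is linear on each cone of $\Delta$; ampleness of $D$ then translates into strict convexity of $\varphi_{D}$. Such a strictly convex function cuts out the lattice polytope $P_{D}=\left\{ \, m\in M_{\mathbb{R}}:\left\langle m,v\right\rangle \ge\varphi_{D}\left(v\right)\text{ for all }v\,\right\} $ whose normal fan is precisely $\Delta$, and conversely the normal fan of any full-dimensional lattice polytope carries such a function. The delicate point is the passage from an abstract ample line bundle to a torus-invariant representative: here I would use that the Picard group of a toric variety is generated by the classes of torus-invariant divisors, so that the ample class may be taken invariant, after which the convexity dictionary completes the argument.
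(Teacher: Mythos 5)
The paper does not prove this proposition at all: it is stated as background and attributed to \cite{Kleinschmidt1988} (with the projectivity dictionary deferred to \cite{Buchstaber2002}), so there is no in-paper argument to compare against. Your outline is the standard textbook proof (as in Fulton or Cox--Little--Schenck), and three of your four parts are essentially correct as sketched: the chart-by-chart smoothness criterion via the cotangent space at the torus-fixed point, the one-parameter-subgroup test for properness, and the ample-divisor/strictly-convex-support-function/normal-fan dictionary for projectivity (including the correct observation that one must first replace an abstract ample class by a torus-invariant representative).

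The one place where your argument as written has a genuine gap is the ``only if'' direction of the isomorphism criterion. You say that ``any torus-equivariant isomorphism restricts to an automorphism of the dense torus,'' but the hypothesis is only that $X_{\Delta}\cong X_{\Delta'}$ as varieties, and an abstract isomorphism need not carry the dense torus of one to the dense torus of the other: the automorphism group of a toric variety is in general much larger than its torus (for $\mathbb{C}P^{n}$ it is $\mathrm{PGL}_{n+1}$, which moves the standard torus to many conjugates). To close this you must argue that the two embedded tori are maximal tori in the (linear algebraic) automorphism group and hence conjugate, so that after composing with an automorphism the given isomorphism may be assumed equivariant; only then does it induce a lattice isomorphism matching the fans. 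This step requires completeness (or at least some hypothesis making $\mathrm{Aut}(X_{\Delta})$ an algebraic group), which is consistent with how the result is used in the paper but should be said. A smaller imprecision: in the compactness part, the limits $\lim_{t\to0}\lambda_{v}(t)$ by themselves only give the ``easy'' direction (incompleteness implies non-compactness); for the converse you still need the valuative criterion with an arbitrary discrete valuation ring, reduced to the torus by equivariance, rather than literally ``testing along curves.''
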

The convenient combinatorial structure of a fan can also be used to
determine many important topological properties of the corresponding
toric varieties. For example, Jurkiewicz computed the integral cohomology
ring of a smooth projective toric variety, and Danilov generalized
the result to all smooth toric varieties.

Consider a complete regular fan $\Delta$ in $\mathbb{R}^{n}$ with
generating rays $v_{1},\ldots,v_{m}$. Each of the rays $v_{k}$ is
a one-dimensional cone in $\Delta$ which corresponds to a codimension
two subvariety $X_{k}$ of $X_{\Delta}$. Each of these subvarieties
determines a cohomology class in $H^{2}\left(X_{\Delta}\right)$ by
taking the image of the fundamental class $\left[X_{k}\right]$ of
$X_{k}$ under the composition
\[
H_{2n-2}\left(X_{k}\right)\hookrightarrow H_{2n-2}\left(X_{\Delta}\right)\rightarrow H^{2}\left(X_{\Delta}\right),
\]
where the first map is induced from inclusion and the second is Poincar\'e
duality. Denote the cohomology class in $H^{2}\left(X_{\Delta}\right)$
corresponding to the ray $v_{k}$ by $v_{k}$ as well. It will be
clear from context what the meaning of $v_{k}$ is.
\begin{thm}
\label{cohomSPTV}(\cite{Jurkiewicz1985,Danilov1978}) Suppose the
generating rays $v_{1},\ldots,v_{m}$ of a complete regular fan $\Delta$
in $\mathbb{R}^{n}$ are given by $v_{j}=\left(\lambda_{1j},\ldots,\lambda_{nj}\right)$.
For $i=1,\ldots,n$, set
\[
\theta_{i}=\lambda_{i1}v_{1}+\ldots+\lambda_{im}v_{m}\in\mathbb{Z}\left[v_{1},\ldots,v_{m}\right].
\]
Define $L=\left(\theta_{1},\ldots,\theta_{n}\right)$ to be the ideal
generated by these linear polynomials. Also define $J$ to be the
ideal generated by all square-free monomials $v_{i_{1}}\cdots v_{i_{k}}$
such that $v_{i_{1}},\ldots,v_{i_{k}}$ do \emph{not} span a cone
in $\Delta$ (the Stanley-Reisner ideal of $\Delta$). Then the integral
cohomology of the toric variety $X_{\Delta}$ is given by
\[
H^{*}\left(X_{\Delta}\right)\cong\mathbb{Z}\left[v_{1},\ldots,v_{m}\right]/\left(L+J\right).
\]

\end{thm}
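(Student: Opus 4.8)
The plan is to construct an explicit ring homomorphism
$\phi:\mathbb{Z}\left[v_{1},\ldots,v_{m}\right]/\left(L+J\right)\to H^{*}\left(X_{\Delta}\right)$
sending each generator $v_{k}$ to the degree-two class it names, and then to show $\phi$ is an isomorphism by verifying that the defining relations hold, that $\phi$ is surjective, and finally that source and target have equal rank in each degree. For the additive structure I would use the orbit-cone correspondence together with a generic one-parameter subgroup $\gamma:\mathbb{C}^{*}\to T$, whose isolated fixed points are indexed by the maximal cones of $\Delta$. Since $\Delta$ is complete and regular, the associated Bialynicki--Birula decomposition expresses $X_{\Delta}$ as a disjoint union of complex affine cells, one per maximal cone, so every cell has even real dimension. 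Consequently $H^{*}\left(X_{\Delta}\right)$ is a free abelian group concentrated in even degrees, and $\mathrm{rank}\, H^{2i}\left(X_{\Delta}\right)$ equals the number of cells of real dimension $2i$, namely the entry $h_{i}$ of the $h$-vector of $\Delta$. Moreover, the orbit closures $V\left(\sigma\right)=\bigcap_{v_{k}\in\sigma}X_{k}$ furnish a basis of $H_{*}\left(X_{\Delta}\right)$ and are Poincar\'e dual to the monomials $\prod_{v_{k}\in\sigma}v_{k}$; hence $H^{*}\left(X_{\Delta}\right)$ is generated as a ring by its degree-two part, so any such $\phi$ is automatically surjective.

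Next I would check that the relations $L$ and $J$ actually vanish in cohomology, which is the geometric heart of the construction. For $J$: if rays $v_{i_{1}},\ldots,v_{i_{k}}$ do not span a cone of $\Delta$, then by the orbit-cone correspondence the invariant divisors $X_{i_{1}},\ldots,X_{i_{k}}$ have empty common intersection, so the product $v_{i_{1}}\cdots v_{i_{k}}$ of their duals vanishes. For $L$: each character $u$ in the dual lattice $M\cong\mathbb{Z}^{n}$ defines a rational function $\chi^{u}$ on $X_{\Delta}$ whose principal divisor is $\sum_{k}\left\langle u,v_{k}\right\rangle X_{k}$; since principal divisors are homologically trivial, the class $\sum_{k}\left\langle u,v_{k}\right\rangle v_{k}$ vanishes in $H^{2}\left(X_{\Delta}\right)$. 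Taking $u$ to be the $i$-th coordinate functional gives $\left\langle u,v_{j}\right\rangle =\lambda_{ij}$ and hence precisely the relation $\theta_{i}=0$. Thus $\phi$ descends to a well-defined, surjective ring homomorphism.

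It remains to show $\phi$ is injective, which I would do by proving that the source is itself a free abelian group of rank $h_{i}$ in degree $2i$. Here I would invoke Stanley--Reisner theory: $\mathbb{Z}\left[v_{1},\ldots,v_{m}\right]/J$ is the face ring of the simplicial complex underlying $\Delta$, and because $\Delta$ is complete and simplicial this complex is a simplicial sphere, which is Cohen--Macaulay over $\mathbb{Z}$ by Reisner's criterion. The regularity of $\Delta$ is exactly what forces the forms $\theta_{1},\ldots,\theta_{n}$ to restrict to a basis on each maximal cone (the relevant change-of-basis matrix is unimodular), so they form a linear system of parameters and, by Cohen--Macaulayness, a regular sequence over $\mathbb{Z}$. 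Consequently $\mathbb{Z}\left[v_{1},\ldots,v_{m}\right]/\left(L+J\right)$ is a free $\mathbb{Z}$-module whose Hilbert function is the $h$-polynomial of $\Delta$, giving rank $h_{i}$ in degree $2i$. Since $\phi$ is a degree-preserving surjection of free abelian groups of equal finite rank in every degree, it is an isomorphism.

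The main obstacle is this injectivity step, and within it the integral rather than merely rational statement. Matching the two descriptions of the $h$-vector is only bookkeeping; the substantive input is that the face ring of a simplicial sphere is Cohen--Macaulay over $\mathbb{Z}$ and that smoothness of $X_{\Delta}$ upgrades the $\theta_{i}$ from a rational to an integral regular sequence with torsion-free quotient. It is precisely the regularity hypothesis that excludes the finite quotient singularities which would otherwise introduce torsion and destroy the clean integral isomorphism.
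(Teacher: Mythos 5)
The paper does not prove this theorem at all: it is quoted as background with a citation to Jurkiewicz and Danilov, so there is no in-paper argument to compare against. Your sketch is essentially the standard proof found in those references and in Fulton's and Cox--Little--Schenck's treatments: relations from empty intersections of invariant divisors and from principal divisors of characters, surjectivity from the Bialynicki--Birula decomposition into even-dimensional cells indexed by maximal cones, and injectivity by matching ranks, using that the $\theta_{i}$ form a linear system of parameters (hence, by Cohen--Macaulayness of the face ring of the underlying simplicial sphere, a regular sequence) whose quotient has Hilbert function the $h$-polynomial. The structure is sound and you correctly identify the two delicate points, namely integrality (where unimodularity of the maximal cones is what kills torsion) and the passage from l.s.o.p.\ to regular sequence. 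One imprecision worth fixing: the full collection of orbit closures $V\left(\sigma\right)$ does not furnish a \emph{basis} of $H_{*}\left(X_{\Delta}\right)$ --- there are more cones than the total Betti number --- rather, the closures of the Bialynicki--Birula cells, each of which is a particular $V\left(\tau\right)$, give a basis, and the remaining orbit closures merely span. This does not affect your conclusion (surjectivity of $\phi$ and ring generation in degree two), but as written the claim is false and should be restated in terms of a spanning set or the cell-closure basis.
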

The Chern class of a smooth toric variety can also be computed using
combinatorial data. The natural complex structure of a smooth toric
variety leads to a stable splitting of its tangent bundle, and this
splitting is encoded in the fan associated to the toric variety.
\begin{thm}
\label{ChernSPTV}(see \cite[Section 5.3]{Buchstaber2002} for details)
Given a complete regular fan $\Delta$ in $\mathbb{R}^{n}$ with generating
rays $v_{1},\ldots,v_{m}$, the total Chern class of $X_{\Delta}$
is given by
\[
c\left(X_{\Delta}\right)=\left(1+v_{1}\right)\left(1+v_{2}\right)\cdots\left(1+v_{m}\right)\in H^{*}\left(X_{\Delta}\right).
\]

\end{thm}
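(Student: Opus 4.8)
The plan is to deduce the formula from a single short exact sequence of vector bundles on $X_\Delta$ encoding the stable splitting of the tangent bundle, combined with the Whitney sum formula. Concretely, I would first produce the \emph{generalized Euler sequence}
$$0 \longrightarrow \mathcal{O}_{X_\Delta}^{\oplus(m-n)} \longrightarrow \bigoplus_{k=1}^{m} \mathcal{O}_{X_\Delta}(D_k) \longrightarrow T X_\Delta \longrightarrow 0,$$
where $D_k$ denotes the torus-invariant divisor (the subvariety $X_k$ described before Theorem \ref{cohomSPTV}) associated to the ray $v_k$. Both hypotheses on $\Delta$ enter precisely here: regularity guarantees that $X_\Delta$ is smooth, so that $TX_\Delta$ is an honest rank-$n$ holomorphic bundle whose Chern classes are defined; and completeness guarantees that $v_1,\dots,v_m$ span $\mathbb{R}^n$, so that the lattice map $\beta\colon \mathbb{Z}^m \to \mathbb{Z}^n$, $e_k \mapsto v_k$, is surjective with free kernel of rank $m-n$, which is what pins down the ranks of the three terms.

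To build the sequence I would use Cox's presentation of $X_\Delta$ as a geometric quotient of an open subset $U \subseteq \mathbb{C}^m$ by the action of a torus $G = (\mathbb{C}^\ast)^{m-n}$. The tangent sequence of this quotient, $0 \to \mathfrak{g}\otimes \mathcal{O} \to TU \to \pi^\ast TX_\Delta \to 0$, descends to $X_\Delta$: the restriction of $T\mathbb{C}^m = \bigoplus_k \mathcal{O}_{\mathbb{C}^m}$ pushes down to $\bigoplus_k \mathcal{O}_{X_\Delta}(D_k)$, each coordinate line bundle acquiring the $G$-weight that twists it into the divisor bundle of $D_k$, while the trivial subbundle $\mathfrak{g}\otimes\mathcal{O}$ of rank $\dim G = m-n$ descends to the trivial summand $\mathcal{O}^{\oplus(m-n)}$. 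The combinatorial shadow of this construction is the divisor-class sequence $0 \to M \to \mathbb{Z}^m \to \mathrm{Cl}(X_\Delta) \to 0$ obtained by dualizing $\beta$, which gives an independent check of the ranks and of the $G$-weights.

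With the sequence in hand the computation is immediate. The total Chern class is multiplicative on short exact sequences, so
$$c(TX_\Delta)\cdot c\!\left(\mathcal{O}_{X_\Delta}^{\oplus(m-n)}\right) = \prod_{k=1}^{m} c\bigl(\mathcal{O}_{X_\Delta}(D_k)\bigr).$$
The trivial bundle contributes $c(\mathcal{O}^{\oplus(m-n)}) = 1$, and each line bundle contributes $c(\mathcal{O}(D_k)) = 1 + c_1(\mathcal{O}(D_k))$. Finally I would identify $c_1(\mathcal{O}_{X_\Delta}(D_k))$ with the class $v_k \in H^2(X_\Delta)$ defined before Theorem \ref{cohomSPTV} as the Poincar\'e dual of $[X_k]$; this is the standard fact that the first Chern class of the line bundle associated to a smooth divisor is the dual of that divisor. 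Combining these identities gives $c(X_\Delta) = \prod_{k=1}^m (1+v_k)$.

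I expect the only genuine obstacle to be the rigorous construction of the Euler sequence: verifying that the coordinate line bundles on $\mathbb{C}^m$ descend to the divisor bundles $\mathcal{O}(D_k)$ with the correct $G$-linearization, and that the quotient map is smooth enough for the tangent sequence to descend. As a check that requires no quotient machinery, one can confirm the formula by torus-equivariant localization: at each torus-fixed point, corresponding to a maximal cone $\sigma$ whose rays $v_{i_1},\dots,v_{i_n}$ form a lattice basis (using regularity), the tangent space splits equivariantly into $n$ weight spaces dual to that basis, and matching the resulting local Chern data against $\prod_{k=1}^m(1+v_k)$ confirms the global formula.
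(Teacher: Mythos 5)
Your proposal is correct, and it takes essentially the approach the paper intends: the paper offers no proof of this theorem, citing Buchstaber--Panov for the ``stable splitting of the tangent bundle'' mentioned in the preceding sentence, and the generalized Euler sequence you extract from the Cox quotient construction is exactly that splitting in its holomorphic form, with Whitney multiplicativity and the identification $c_{1}\left(\mathcal{O}\left(D_{k}\right)\right)=v_{k}$ finishing the computation just as in the cited source. The only cosmetic difference is that the reference carries out the splitting topologically via the moment-angle manifold rather than the algebraic quotient, so nothing in your argument diverges in substance.
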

This splitting of the Chern class leads to a description of the Milnor
genus of a smooth toric variety in terms of its fan.
\begin{cor}
\label{MilnorSPTV}Let $X_{\Delta}$ be a smooth toric variety corresponding
to a complete regular fan $\Delta$ in $\mathbb{R}^{n}$ with generating
rays $v_{1},\ldots,v_{m}$. Then the Milnor genus of $X_{\Delta}$
is given by
\[
s_{n}\left[X_{\Delta}\right]=\left\langle \sum_{k=1}^{m}v_{k}^{n},\mu_{X_{\Delta}}\right\rangle .
\]

\end{cor}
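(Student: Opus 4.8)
The plan is to read the Milnor genus directly off the factorization of the total Chern class supplied by Theorem~\ref{ChernSPTV}. The one point requiring care is that the definition of $s_n\left[M\right]$ writes the total Chern class as a product of exactly $n$ factors $\prod_{k=1}^n\left(1+x_k\right)$, whereas Theorem~\ref{ChernSPTV} presents $c\left(X_{\Delta}\right)$ as a product of $m$ factors $\prod_{k=1}^m\left(1+v_k\right)$, and in general $m>n$. The resolution is that the characteristic class $s_n$ is \emph{additive}, so the number of linear factors used is immaterial.

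First I would record this additivity. The symmetric function $p_n=\sum_k x_k^n$ satisfies $p_n\!\left(\{x_i\}\cup\{y_j\}\right)=p_n\!\left(\{x_i\}\right)+p_n\!\left(\{y_j\}\right)$, which translates into the statement that if a total Chern class factors as a product $c=c'c''$, then $s_n\left(c\right)=s_n\left(c'\right)+s_n\left(c''\right)$. For a single factor $1+v$ there is one Chern root, so $s_n\left(1+v\right)=v^n$. Equivalently, this additivity is encoded in Newton's identity
\[
p_n=e_1 p_{n-1}-e_2 p_{n-2}+\cdots+\left(-1\right)^{n-1}n\,e_n,
\]
which expresses $p_n$ as a fixed universal polynomial in only the first $n$ elementary symmetric functions $e_1,\ldots,e_n$. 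Hence $s_n\left(c\right)$ depends solely on the Chern classes $c_1,\ldots,c_n$ and not on any chosen splitting, so its value is unchanged whether $c\left(X_{\Delta}\right)$ is expanded into $n$ abstract Chern roots (via the splitting principle) or into the $m$ linear factors of Theorem~\ref{ChernSPTV}.

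With additivity in hand the computation is immediate. Applying Theorem~\ref{ChernSPTV} and then the additivity of $s_n$ gives
\[
s_n\!\left(c\left(X_{\Delta}\right)\right)=s_n\!\left(\prod_{k=1}^m\left(1+v_k\right)\right)=\sum_{k=1}^m s_n\!\left(1+v_k\right)=\sum_{k=1}^m v_k^n\in H^{2n}\left(X_{\Delta}\right).
\]
Evaluating this degree-$2n$ class on the fundamental class $\mu_{X_{\Delta}}$ and comparing with the definition of the Milnor genus yields
\[
s_n\left[X_{\Delta}\right]=\left\langle\sum_{k=1}^m v_k^n,\mu_{X_{\Delta}}\right\rangle,
\]
as claimed.

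There is no substantial obstacle: the corollary is essentially a substitution of Theorem~\ref{ChernSPTV} into the definition of $s_n$. The only conceptual step is to justify that the power-sum characteristic class is insensitive to the number of factors used to present the total Chern class, and this is exactly the additivity recorded above. The genuine mathematical work lies upstream in Theorem~\ref{ChernSPTV}, which identifies the $m$ rays $v_1,\ldots,v_m$ of the fan as the Chern roots of the stably split tangent bundle of $X_{\Delta}$.
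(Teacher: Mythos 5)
Your proposal is correct and follows exactly the route the paper intends: the corollary is stated without proof as an immediate consequence of Theorem~\ref{ChernSPTV}, namely substituting the splitting $c\left(X_{\Delta}\right)=\prod_{k=1}^{m}\left(1+v_{k}\right)$ into the definition of the Milnor genus. Your extra care in justifying, via Newton's identities, that $s_{n}$ is insensitive to the number of linear factors (since $m$ generally exceeds $n$) is a worthwhile clarification of a point the paper passes over silently.
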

Unfortunately, this formula is usually difficult to evaluate in most
cohomology rings of smooth toric varieties. The following proposition
is particularly useful in attempting these evaluations of characteristic
numbers.
\begin{prop}
\label{charicSPTV}(\cite[Section 5.1]{Fulton1993}) Suppose $\mbox{pos}\left(v_{1},\ldots,v_{n}\right)$
is a maximal cone of a complete regular fan $\Delta$ in $\mathbb{R}^{n}$.
Then evaluating $v_{1}\cdots v_{n}\in H^{2n}\left(X_{\Delta}\right)$
on the fundamental class $\mu_{X_{\Delta}}$ of the variety yields
one, i.e.
\[
\left\langle v_{1}\cdots v_{n},\mu_{X_{\Delta}}\right\rangle =1.
\]

\end{prop}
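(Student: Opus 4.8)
The plan is to interpret the cup product $v_{1}\cdots v_{n}$ geometrically as a transverse intersection of torus-invariant divisors and then to use the regularity of $\Delta$ to pin the intersection multiplicity down to exactly one. First I would invoke the orbit-cone correspondence stated above: since $\sigma=\mbox{pos}\left(v_{1},\ldots,v_{n}\right)$ is a maximal ($n$-dimensional) cone of $\Delta$, it corresponds to a $0$-dimensional orbit, that is, a single torus-fixed point $x_{\sigma}\in X_{\Delta}$. Next I would recall that each generating ray $v_{k}$ corresponds to a codimension-one invariant subvariety $X_{k}$ whose fundamental class has Poincar\'e dual the degree-two class also denoted $v_{k}$, so that the cup product $v_{1}\cdots v_{n}\in H^{2n}\left(X_{\Delta}\right)$ is Poincar\'e dual to the intersection cycle $X_{1}\cap\cdots\cap X_{n}$. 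Because each ray $v_{k}$ is a face of $\sigma$, while no other maximal cone can contain all of $v_{1},\ldots,v_{n}$, this intersection is exactly the orbit closure $V\left(\sigma\right)$, namely the single point $x_{\sigma}$.

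The key step is to compute the intersection multiplicity at $x_{\sigma}$, and this is precisely where regularity enters. Since $\Delta$ is regular, the rays $v_{1},\ldots,v_{n}$ spanning the maximal cone $\sigma$ form an integer basis of $\mathbb{Z}^{n}$; consequently the affine chart $U_{\sigma}$ associated to $\sigma$ is isomorphic to $\mathbb{C}^{n}$, with $x_{\sigma}$ the origin and the invariant divisors $X_{1},\ldots,X_{n}$ identified with the coordinate hyperplanes $\left\{ z_{1}=0\right\} ,\ldots,\left\{ z_{n}=0\right\} $. These $n$ hyperplanes meet transversally in the single reduced point at the origin, so the local intersection number is one. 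Passing from this local computation to the global pairing via the standard fact that the cup product of Poincar\'e duals of transversally intersecting submanifolds is the Poincar\'e dual of their intersection, I would conclude
\[
\left\langle v_{1}\cdots v_{n},\mu_{X_{\Delta}}\right\rangle =\deg\left(X_{1}\cap\cdots\cap X_{n}\right)=1.
\]

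The main obstacle I anticipate is the multiplicity computation rather than the identification of the support. If $\Delta$ were merely complete but not regular, the rays of $\sigma$ would span a finite-index sublattice of $\mathbb{Z}^{n}$, and the intersection number would instead be the lattice index $\left[\mathbb{Z}^{n}:\mathbb{Z}v_{1}+\cdots+\mathbb{Z}v_{n}\right]$, which can exceed one; it is exactly the hypothesis that every maximal cone is spanned by a lattice basis that forces this index, and hence the pairing, to equal one. An equivalent route avoiding explicit local coordinates would be to argue inductively on $n$: the divisor $X_{n}$ is itself a smooth complete toric variety whose fan is the quotient (star) of $\Delta$ along $v_{n}$, the images $\bar{v}_{1},\ldots,\bar{v}_{n-1}$ span a maximal cone of that quotient fan, and restricting $v_{1}\cdots v_{n-1}$ to $X_{n}$ together with the projection formula reduces the claim to one dimension lower. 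The base case $n=1$ is the observation that the only complete regular fan in $\mathbb{R}^{1}$ gives $X_{\Delta}=\mathbb{C}P^{1}$, on which the class of a maximal ray is the class of a point and pairs to one.
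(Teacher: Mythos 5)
The paper gives no proof of this proposition---it is quoted directly from Fulton, Section 5.1---and your argument is a correct reconstruction of the standard one found there: the maximal cone $\sigma$ corresponds to a torus-fixed point, regularity identifies the chart $U_{\sigma}$ with $\mathbb{C}^{n}$ so that the $n$ invariant divisors dual to $v_{1},\ldots,v_{n}$ meet transversally in that single reduced point (and in no other orbit closure, since no other cone of the simplicial fan contains all $n$ rays), and the pairing is the degree of that intersection, namely one. The only slip is in your aside about the merely simplicial case: there $v_{1}\cdots v_{n}$ evaluates (in rational cohomology) to the reciprocal $1/\left[\mathbb{Z}^{n}:\mathbb{Z}v_{1}+\cdots+\mathbb{Z}v_{n}\right]$ of the lattice index rather than the index itself, but this does not affect the proof of the stated proposition.
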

The blow-up $\mbox{Bl}_{V}X$ of a variety $X$ along a subvariety
$V$ can also be described using fans in the case of toric varieties
(see \cite[Chapter 1 Section 4 and Chapter 4 Section 6]{Griffiths1978}
for details about blow-ups). Consider a complete regular fan $\Delta$
in $\mathbb{R}^{n}$ containing a cone $\sigma$ of dimension $k$.
Then there are $k$-many generating rays $v_{1},\ldots,v_{k}$ of
$\Delta$ such that $\sigma=\mbox{pos}\left(v_{1},\ldots,v_{k}\right)$.
Construct a new fan $\mbox{Bl}_{\sigma}\Delta$ by first introducing
a new generating ray $x=v_{1}+\ldots+v_{k}$. To obtain the cones
of $\mbox{Bl}_{\sigma}\Delta$, first keep all cones in $\Delta$
that do not contain $\sigma$. Any cone $\tau$ in $\Delta$ that
contains $\sigma$ is no longer one of the cones in $\mbox{Bl}_{\sigma}\Delta$.
These cones $\tau$ in $\Delta$ of the form $\tau=\mbox{pos}\left(v_{1},\ldots,v_{k},v_{i_{1}},\ldots,v_{i_{j}}\right)$
are removed from $\mbox{Bl}_{\sigma}\Delta$ and replaced with all
cones of the form $\mbox{pos}\left(v_{1},\ldots,\hat{v}_{l},\ldots,v_{k},x,v_{i_{1}},\ldots,v_{i_{j}}\right)$.
That is, one of the rays of $\sigma$ is removed and replaced with
$x$ to obtain a new cone in $\mbox{Bl}_{\sigma}\Delta$. The fan
$\mbox{Bl}_{\sigma}\Delta$ is called the \emph{star subdivision }of
$\Delta$ relative to $\sigma$ (see \cite[Section 3.3]{Cox2011}
for details).
\begin{prop}
(\cite[Section 3.3]{Cox2011})  Let $\Delta$ be a complete regular
fan in $\mathbb{R}^{n}$. Consider a $k$-dimensional cone $\sigma=\mbox{pos}\left(v_{1},\ldots,v_{k}\right)$
in $\Delta$, and let $X_{\sigma}$ denote the $\left(n-k\right)$-dimensional
toric subvariety of $X_{\Delta}$ which is associated to the cone
$\sigma$. Then $X_{\mbox{\emph{Bl}}_{\sigma}\Delta}=\mbox{\emph{Bl}}_{X_{\sigma}}X_{\Delta}$.
That is, the blow-up of $X_{\Delta}$ along the subvariety $X_{\sigma}$
is a toric variety whose associated fan is the star subdivision of
$\Delta$ relative to $\sigma$.
\end{prop}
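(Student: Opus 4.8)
The plan is to reduce the statement to a local computation on affine charts and then match the resulting combinatorial data to the star subdivision. First I would observe that $X_{\sigma}$ is a torus-invariant subvariety, so the blow-up $\mbox{Bl}_{X_{\sigma}}X_{\Delta}$ carries a natural torus action making the blow-down map equivariant; since the center $X_{\sigma}$ lies in the complement of the dense torus orbit, the blow-up is an isomorphism over that orbit, and the dense torus persists as an open subset. Because $\Delta$ is regular, $X_{\Delta}$ is smooth, and the blow-up of a smooth variety along a smooth torus-invariant center is again smooth, hence normal. Therefore $\mbox{Bl}_{X_{\sigma}}X_{\Delta}$ is itself a smooth toric variety, and by the correspondence between fans and toric varieties it equals $X_{\Delta'}$ for some complete regular fan $\Delta'$. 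The task is then to identify $\Delta'$ with the star subdivision $\mbox{Bl}_{\sigma}\Delta$.

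Next I would exploit that blow-up commutes with restriction to open subsets: for each maximal cone $\tau$ of $\Delta$, the restriction of $\mbox{Bl}_{X_{\sigma}}X_{\Delta}$ to the affine chart $U_{\tau}$ is the blow-up of $U_{\tau}$ along $X_{\sigma}\cap U_{\tau}$. By the orbit-cone correspondence, $X_{\sigma}$ meets $U_{\tau}$ precisely when $\sigma$ is a face of $\tau$. For maximal cones $\tau$ not containing $\sigma$ the center is empty in $U_{\tau}$, so the blow-up is an isomorphism there and these cones should survive unchanged in $\Delta'$ --- exactly as prescribed by the star subdivision.

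For a maximal cone $\tau=\mbox{pos}\left(v_{1},\ldots,v_{k},v_{k+1},\ldots,v_{n}\right)$ containing $\sigma$, regularity lets me apply a unimodular transformation carrying $v_{1},\ldots,v_{n}$ to the standard basis $e_{1},\ldots,e_{n}$, so that $U_{\tau}\cong\mathbb{C}^{n}$ and $\sigma=\mbox{pos}\left(e_{1},\ldots,e_{k}\right)$. Under the cone--chart dictionary the center $X_{\sigma}\cap U_{\tau}$ becomes the linear coordinate subspace $\left\{x_{1}=\cdots=x_{k}=0\right\}$. I would then perform the explicit blow-up of $\mathbb{C}^{n}$ along this subspace inside $\mathbb{C}^{n}\times\mathbb{C}P^{k-1}$, whose standard affine charts (indexed by $l=1,\ldots,k$) are again copies of $\mathbb{C}^{n}$. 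Reading each chart back through the dictionary, chart $l$ corresponds to the cone $\mbox{pos}\left(e_{1},\ldots,\hat{e}_{l},\ldots,e_{k},x,e_{k+1},\ldots,e_{n}\right)$ with $x=e_{1}+\cdots+e_{k}$, which is precisely the collection of cones produced by the star subdivision relative to $\sigma$.

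Finally I would check that these local pictures glue. Since the star subdivision alters only the cones containing $\sigma$ and leaves all others intact, and since the affine computation reproduces exactly those modified cones on the charts where the center is nonempty while the blow-up is trivial elsewhere, the fan $\Delta'$ coincides cone-by-cone with $\mbox{Bl}_{\sigma}\Delta$. The main obstacle I anticipate is the careful bookkeeping in the affine model: correctly translating the center $X_{\sigma}$ into coordinate equations via the dual-basis convention, matching each blow-up chart to the correct subdivided cone, and confirming that the transition maps between charts are compatible with the gluing of the subdivided fan. The equivariance and normality claims are routine, so the genuine content lies in this local identification.
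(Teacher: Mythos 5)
The paper offers no proof of this proposition; it is quoted as background with a citation to \cite[Section 3.3]{Cox2011}, and your argument is essentially the standard proof found there. Your reduction --- equivariance and smoothness to guarantee the blow-up is again toric, localization to the affine charts $U_{\tau}$, the observation via the orbit--cone correspondence that the center is empty unless $\sigma\preceq\tau$, the unimodular normalization of a maximal cone containing $\sigma$ to $\mbox{pos}\left(e_{1},\ldots,e_{n}\right)$ so that the center becomes $\left\{ x_{1}=\cdots=x_{k}=0\right\} $, and the chart-by-chart matching of the blow-up of $\mathbb{C}^{n}$ inside $\mathbb{C}^{n}\times\mathbb{C}P^{k-1}$ with the cones $\mbox{pos}\left(e_{1},\ldots,\hat{e}_{l},\ldots,e_{k},x,e_{k+1},\ldots,e_{n}\right)$ --- is correct, and since a fan is determined by its maximal cones, checking these charts suffices.
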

The operation of blowing up along torus-equivariant subvarieties preserves
several key properties of toric varieties. The following proposition
is well-known.
\begin{prop}
\label{blowupsmooth}The blow-up of a smooth projective toric variety
along a subvariety that is an orbit of the torus action is itself
a smooth projective toric variety.
\end{prop}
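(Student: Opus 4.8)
The plan is to reduce all three assertions to combinatorial properties of the star subdivision and then invoke the criteria for completeness, regularity, and polytopality from the proposition on algebraic properties of $X_\Delta$. Write $X_\Delta$ for the given smooth projective toric variety, so $\Delta$ is a complete regular fan that is normal to a lattice polytope. By the orbit--cone correspondence, a subvariety of $X_\Delta$ that is (the closure of) a torus orbit is exactly the toric subvariety $X_\sigma$ associated to some cone $\sigma=\mbox{pos}\left(v_1,\ldots,v_k\right)$ of $\Delta$. Hence the preceding proposition applies and gives $\mbox{Bl}_{X_\sigma}X_\Delta=X_{\mbox{Bl}_\sigma\Delta}$, so the blow-up is automatically \emph{toric}, with associated fan the star subdivision $\mbox{Bl}_\sigma\Delta$. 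It remains only to check that $\mbox{Bl}_\sigma\Delta$ is complete, regular, and normal to a polytope. Completeness is immediate: the star subdivision only refines $\sigma$ and the cones containing it, so the support $\bigcup_\tau\tau=\mathbb{R}^n$ is unchanged, and therefore the blow-up is compact.

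For \emph{smoothness} I would verify that $\mbox{Bl}_\sigma\Delta$ is regular by a direct unimodularity computation on each new maximal cone. A maximal cone of $\Delta$ containing $\sigma$ has the form $\tau=\mbox{pos}\left(v_1,\ldots,v_k,v_{i_1},\ldots,v_{i_j}\right)$ with $k+j=n$, and regularity of $\Delta$ says that $\left\{v_1,\ldots,v_k,v_{i_1},\ldots,v_{i_j}\right\}$ is an integer basis of $\mathbb{Z}^n$. Each new maximal cone replaces a single ray $v_l$ of $\sigma$ by the new ray $x=v_1+\ldots+v_k$. Expressed in the old basis, $x$ has coordinate $1$ in the $v_l$-slot, so the transition matrix from the old basis to the new generating set $\left\{v_1,\ldots,\hat{v}_l,\ldots,v_k,x,v_{i_1},\ldots,v_{i_j}\right\}$ is unipotent with determinant $\pm1$. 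Thus the new generating set is again an integer basis, every new maximal cone is regular, and $\mbox{Bl}_\sigma\Delta$ is regular.

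The main obstacle is \emph{projectivity}, and I would handle it using the characterization of polytopal fans by strictly convex support functions. Since $\Delta$ is normal to a lattice polytope, there is a piecewise-linear function $\phi\colon\mathbb{R}^n\to\mathbb{R}$ that is linear on each cone of $\Delta$ and strictly convex, i.e.\ strictly below its linear extension across every wall. Because $x$ lies in the interior of $\sigma$ and $\phi$ is linear on $\sigma$, one has $\phi(x)=\phi(v_1)+\ldots+\phi(v_k)$. I would define $\tilde\phi$ on $\mbox{Bl}_\sigma\Delta$ to agree with $\phi$ on all original rays and to take the value $\phi(v_1)+\ldots+\phi(v_k)-\varepsilon$ on $x$, for a sufficiently small $\varepsilon>0$, extending linearly on the new cones. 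Lowering the value at $x$ creates a downward ``dent'' that makes $\tilde\phi$ strictly convex across every new wall incident to $x$, while the walls inherited from $\Delta$ are perturbed by an arbitrarily small amount and so remain strictly convex for $\varepsilon$ small; thus $\tilde\phi$ witnesses that $\mbox{Bl}_\sigma\Delta$ is again normal to a lattice polytope. Alternatively, one may simply invoke the classical fact that the blow-up of a projective variety along a closed subvariety is projective, but the support-function argument has the advantage of remaining entirely within the toric framework. Combining completeness, regularity, and polytopality then yields that the blow-up is a smooth projective toric variety.
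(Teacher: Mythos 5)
Your proof is correct, and its overall strategy coincides with the paper's: reduce everything to the star subdivision $\mbox{Bl}_{\sigma}\Delta$, check regularity of the new maximal cones by a determinant computation (your observation that the coefficient of the discarded ray $v_{l}$ in $x=v_{1}+\ldots+v_{k}$ is $1$, so the transition matrix has determinant $\pm1$, is exactly the computation the paper alludes to), and then establish projectivity combinatorially. The one place you genuinely diverge is the projectivity step: the paper truncates the normal polytope along the face corresponding to $\sigma$ and then dilates the resulting rational polytope to a lattice polytope, whereas you perturb the strictly convex support function by lowering its value at the new ray $x$. These are dual formulations of the same construction --- the truncated polytope is precisely the one whose support function is your $\tilde\phi$ --- so neither buys more generality, but the support-function version makes the ``strict convexity across new walls'' verification more transparent, while the polytope version makes the need for a final dilation explicit. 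That dilation step is the only detail you elide: you should take $\varepsilon$ rational, note that $\tilde\phi$ then defines a rational polytope, and scale to obtain a lattice polytope with the same normal fan; as written, ``sufficiently small $\varepsilon>0$'' does not by itself deliver a \emph{lattice} polytope. This is a minor omission, not a gap in the argument.
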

It is straight-forward to verify that the blow-up is smooth by computing
determinants of the maximal cones resulting from the star subdivision.
The fan of the blown up variety is normal to a polytope obtained by
truncating the polytope associated to the original variety along the
face corresponding to the cone being blown up. The resulting polytope
has vertices with rational coefficients. Dilating this polytope produces
a lattice polytope, so the blown up variety, whose fan is normal to
this polytope, is also projective.

\bigskip{}

In general, the complexity of the cohomology ring makes it challenging
to compute the Milnor genus of a smooth toric variety using Corollary
\ref{MilnorSPTV}. However, by carefully choosing toric varieties
with a convenient bundle\emph{ }structure and taking certain blow-ups,
one obtains a collection of smooth projective toric varieties that
are simple enough to allow their Milnor genera to be computed yet
still complicated enough to produce a wide array of possible values
for these Milnor genera. These varieties can be used to find complex
cobordism polynomial generators in most dimensions, and it seems likely
that they can be used as generators in every dimension.
\begin{defn}
Fix a complex dimension $n\ge3$, an integer $\varepsilon\in\left\{ 2,\ldots,n-1\right\} $,
and two integers $a$ and $b$. Define $U=\left\{ u_{1},\ldots,u_{n-\varepsilon+1}\right\} $,
where $u_{k}=e_{k}$ is the standard basis vector in $\mathbb{R}^{n}$
for $k=1,\ldots,n-\varepsilon$, and set $u_{n-\varepsilon+1}=\left(-1,\stackrel{\left(n-\varepsilon\right)}{\ldots},-1,0,\ldots,0\right)$.
Also define $V=\left\{ v_{1},\ldots,v_{\varepsilon}\right\} $, where
$v_{k}=e_{n-\varepsilon+k}$ is the standard basis vector for $k=1,\ldots,\varepsilon-1$,
and $v_{\varepsilon}=\left(0,\stackrel{\left(n-\varepsilon-1\right)}{\ldots\ldots},0,a,-1,\ldots,-1,0\right)$.
Finally, define $W=\left\{ w_{1},w_{2}\right\} $, where $w_{1}=e_{n}$
and $w_{2}=\left(0,\ldots,0,b,-1\right)$. A fan $\Delta_{n}^{\varepsilon}\left(a,b\right)$
in $\mathbb{R}^{n}$ can be defined by using the $\left(n+3\right)$-many
generating rays in $U\cup V\cup W$. A maximal cone in $\Delta_{n}^{\varepsilon}\left(a,b\right)$
is obtained by choosing for generators $\left(n-\varepsilon\right)$-many
vectors from $U$, $\left(\varepsilon-1\right)$-many vectors from
$V$, and $1$ vector from $W$. Let $Y_{n}^{\varepsilon}\left(a,b\right)$
denote the toric variety corresponding to this fan.
\end{defn}
It is easy to verify that $\Delta_{n}^{\varepsilon}\left(a,b\right)$
is a complete regular fan that is normal to a lattice polytope. Therefore,
$Y_{n}^{\varepsilon}\left(a,b\right)$ is a compact smooth projective
toric variety. More specifically, $\Delta_{n}^{\varepsilon}\left(a,b\right)$
can be viewed as the join of three separate fans $\Delta_{U}$, $\Delta_{V}$,
and $\Delta_{W}$ whose generating rays belong to $U$, $V$, and
$W$, respectively (see Figure \ref{fig:fan}). On the level of toric
varieties, $Y_{n}^{\varepsilon}\left(a,b\right)$ is a stack of two
projectivized bundles. More specifically, the toric variety corresponding
to $\Delta_{V}*\Delta_{W}$ is a $\mathbb{C}P^{\varepsilon-1}$-bundle
over $X_{\Delta_{W}}\cong\mathbb{C}P^{1}$. The variety $Y_{n}^{\varepsilon}\left(a,b\right)$
is a $\mathbb{C}P^{n-\varepsilon}$-bundle over the variety corresponding
to $\Delta_{V}*\Delta_{W}$. Refer to \cite[Section 3.3]{Cox2011}
for more details about obtaining fiber bundle structures from fans
such as these.

\begin{figure}
\begin{center}\includegraphics{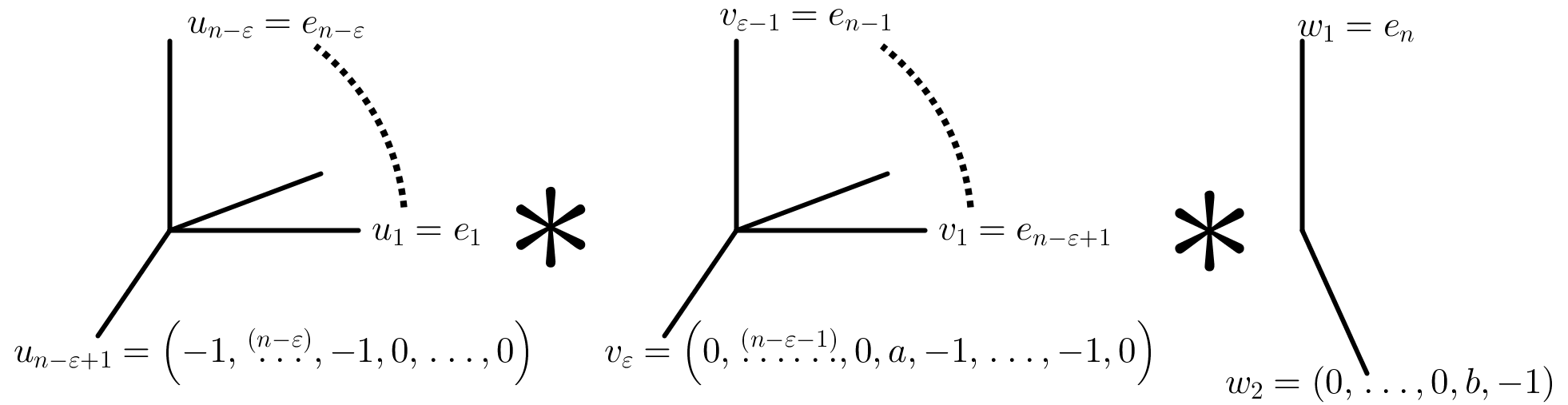}\end{center}

\caption{$\Delta_{n}^{\varepsilon}\left(a,b\right)$ depicted as a join of
fans\label{fig:fan}}
\end{figure}

The bundle structure of $Y_{n}^{\varepsilon}\left(a,b\right)$ makes
it convenient to calculate its cohomology ring and Milnor genus.
\begin{prop}
\label{sn}Fix a complex dimension $n$, an integer $\varepsilon\in\left\{ 2,\ldots,n-1\right\} $
and two integers $a$ and $b$. Define $R_{n}\left(\varepsilon\right)=n-\varepsilon+\left(-1\right)^{\varepsilon}{n-1 \choose \varepsilon}$.
The Milnor genus of $Y_{n}^{\varepsilon}\left(a,b\right)$ is given
by
\[
s_{n}\left[Y_{n}^{\varepsilon}\left(a,b\right)\right]=a^{\varepsilon}bR_{n}\left(\varepsilon\right).
\]
\end{prop}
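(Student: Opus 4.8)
The plan is to read off both the cohomology ring $H^*(Y_n^\varepsilon(a,b))$ and the normalization of the fundamental class directly from the fan, and then feed the result into Corollary \ref{MilnorSPTV}. First I would apply Theorem \ref{cohomSPTV}. Writing out the $n$ linear polynomials $\theta_i$ from the coordinates of the $n+3$ generating rays, I expect them to collapse all the rays onto three primary degree-two classes: setting $u := u_{n-\varepsilon+1}$, $v := v_\varepsilon$, and $w := w_2$, the relations $\theta_i = 0$ should force $u_1 = \cdots = u_{n-\varepsilon-1} = u$ with $u_{n-\varepsilon} = u - av$, then $v_1 = \cdots = v_{\varepsilon-2} = v$ with $v_{\varepsilon-1} = v - bw$, and finally $w_1 = w$. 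Because $\Delta_n^\varepsilon(a,b)$ is the join $\Delta_U * \Delta_V * \Delta_W$ of three projective-space fans, the Stanley--Reisner ideal $J$ is generated by the three full products $u_1\cdots u_{n-\varepsilon+1}$, $v_1\cdots v_\varepsilon$, and $w_1 w_2$. Substituting the linear relations into these three generators should produce the presentation $H^*(Y_n^\varepsilon(a,b)) \cong \mathbb{Z}[u,v,w]/(w^2,\ v^\varepsilon - bv^{\varepsilon-1}w,\ u^{n-\varepsilon+1} - au^{n-\varepsilon}v)$.

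Next I would pin down the top class. In this ring the only monomial of top cohomological degree is $u^{n-\varepsilon}v^{\varepsilon-1}w$, so $H^{2n}$ is cyclic and it remains to compute $\langle u^{n-\varepsilon}v^{\varepsilon-1}w, \mu\rangle$. Choosing the maximal cone spanned by $u_1,\ldots,u_{n-\varepsilon},v_1,\ldots,v_{\varepsilon-1},w_1$, Proposition \ref{charicSPTV} says the product of the associated classes evaluates to $1$. Rewriting that product in terms of $u,v,w$ and reducing modulo the three relations (the cross terms carrying a second factor of $w$ vanish by $w^2 = 0$, and the $v^\varepsilon$ term vanishes after it produces a $w^2$) should collapse the product exactly to $u^{n-\varepsilon}v^{\varepsilon-1}w$, giving $\langle u^{n-\varepsilon}v^{\varepsilon-1}w, \mu\rangle = 1$.

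With the ring and the normalization in hand, I would apply Corollary \ref{MilnorSPTV}. Grouping the rays by their classes yields $\sum (\mathrm{ray})^n = (n-\varepsilon)u^n + (u-av)^n + (\varepsilon-1)v^n + (v-bw)^n + 2w^n$. The reduction facts I expect to use are $u^{n-\varepsilon+j} = a^j u^{n-\varepsilon}v^j$, hence $u^n = a^\varepsilon b\, u^{n-\varepsilon}v^{\varepsilon-1}w$; together with $v^j = 0$ for $j \ge \varepsilon+1$, $v^j w = 0$ for $j \ge \varepsilon$, and $w^j = 0$ for $j \ge 2$. Since $n \ge \varepsilon+1$, these immediately kill the $2w^n$, $(\varepsilon-1)v^n$, and $(v-bw)^n$ terms, leaving $(n-\varepsilon)u^n$ and the expansion of $(u-av)^n$. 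Evaluating the latter term by term produces $a^\varepsilon b\sum_{k=0}^{\varepsilon}(-1)^k\binom{n}{k}$, where the one genuine closed-form step is the alternating binomial identity $\sum_{k=0}^{\varepsilon}(-1)^k\binom{n}{k} = (-1)^\varepsilon\binom{n-1}{\varepsilon}$. Adding the $(n-\varepsilon)a^\varepsilon b$ contribution then gives $s_n[Y_n^\varepsilon(a,b)] = a^\varepsilon b\bigl[(n-\varepsilon) + (-1)^\varepsilon\binom{n-1}{\varepsilon}\bigr] = a^\varepsilon b\, R_n(\varepsilon)$, as claimed.

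I expect the main obstacle to be bookkeeping rather than any single hard idea: correctly reading the Stanley--Reisner generators off the join structure, and faithfully tracking which monomials survive the three reduction relations when expanding the $n$-th powers. The alternating binomial identity is the one nontrivial closed-form ingredient, and verifying that every $w$-bearing term except the top one vanishes is the place where a sign or index slip would be easiest to make.
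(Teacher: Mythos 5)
Your proposal is correct and follows essentially the same route as the paper's proof: the same presentation of $H^{*}\left(Y_{n}^{\varepsilon}\left(a,b\right)\right)$ as $\mathbb{Z}\left[u,v,w\right]$ modulo the three relations, normalization of the top class $u^{n-\varepsilon}v^{\varepsilon-1}w$ via Proposition \ref{charicSPTV}, and expansion of the $n$-th powers of the rays, with the alternating binomial identity producing $\left(-1\right)^{\varepsilon}{n-1 \choose \varepsilon}$. The only cosmetic difference is your choice of maximal cone for the normalization step (the paper uses $\mbox{pos}\left(u_{1},\ldots,u_{n-\varepsilon-1},u_{n-\varepsilon+1},v_{1},\ldots,v_{\varepsilon-2},v_{\varepsilon},w_{2}\right)$, whose product is literally $u^{n-\varepsilon}v^{\varepsilon-1}w$ with no reduction needed, whereas yours requires a short reduction modulo the relations), and your write-up supplies the expansion details that the paper leaves implicit.
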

\begin{proof}
By Theorem \ref{cohomSPTV},
\[
H^{*}\left(Y_{n}^{\varepsilon}\left(a,b\right)\right)\cong\mathbb{Z}\left[u_{1},\ldots,u_{n-\varepsilon+1},v_{1},\ldots,v_{\varepsilon},w_{1},w_{2}\right]/\left(L+J\right)
\]
where
\begin{align*}
L & =\bigl(u_{1}-u_{n-\varepsilon+1},\ldots,u_{n-\varepsilon-1}-u_{n-\varepsilon+1},u_{n-\varepsilon}-u_{n-\varepsilon+1}+av_{\varepsilon},\\
 & \ \ \ \ \ \ \ v_{1}-v_{\varepsilon},\ldots,v_{\varepsilon-2}-v_{\varepsilon},v_{\varepsilon-1}-v_{\varepsilon}+bw_{2},w_{1}-w_{2}\bigl)
\end{align*}
and $J=\left(u_{1}\cdots u_{n-\varepsilon+1},v_{1}\cdots v_{\varepsilon},w_{1}\cdot w_{2}\right)$.
Let $u,v,w\in H^{*}\left(Y_{n}^{\varepsilon}\left(a,b\right)\right)$
denote the cohomology classes corresponding to the generating rays
$u_{n-\varepsilon+1},\ v_{\varepsilon}$, and $w_{2}$, respectively.
Then the cohomology ring of $Y_{n}^{\varepsilon}\left(a,b\right)$
simplifies to become
\[
H^{*}\left(Y_{n}^{\varepsilon}\left(a,b\right)\right)\cong\mathbb{Z}\left[u,v,w\right]/\left(u^{n-\varepsilon+1}-au^{n-\varepsilon}v,v^{\varepsilon}-bv^{\varepsilon-1}w,w^{2}\right).
\]
The Milnor genus of $Y_{n}^{\varepsilon}\left(a,b\right)$ can be
computed by first evaluating
\[
s_{n}\left(c\left(Y_{n}^{\varepsilon}\left(a,b\right)\right)\right)=u_{1}^{n}+\ldots+u_{n-\varepsilon+1}^{n}+v_{1}^{n}+\ldots+v_{\varepsilon}^{n}+w_{1}^{n}+w_{2}^{n}
\]
in this ring. Doing so yields
\[
s_{n}\left(c\left(Y_{n}^{\varepsilon}\left(a,b\right)\right)\right)=a^{\varepsilon}b\left(n-\varepsilon+\left(-1\right)^{\varepsilon}{n-1 \choose \varepsilon}\right)u^{n-\varepsilon}v^{\varepsilon-1}w.
\]
Since $\mbox{pos}\left(u_{1},\ldots,u_{n-\varepsilon-1},u_{n-\varepsilon+1},v_{1},\ldots,v_{\varepsilon-2},v_{\varepsilon},w_{2}\right)$
is a maximal cone in $\Delta_{n}^{\varepsilon}\left(a,b\right)$ and
we have the relation $u_{1}\cdots u_{n-\varepsilon-1}\cdot u_{n-\varepsilon+1}\cdot v_{1}\cdots v_{\varepsilon-2}\cdot v_{\varepsilon}\cdot w_{2}=u^{n-\varepsilon}v^{\varepsilon-1}w$
in $H^{*}\left(Y_{n}^{\varepsilon}\left(a,b\right)\right)$, we have
$\left\langle u^{n-\varepsilon}v^{\varepsilon-1}w,\mu_{Y_{n}^{\varepsilon}\left(a,b\right)}\right\rangle =1$
by Proposition \ref{charicSPTV}. Then
\[
s_{n}\left[Y_{n}^{\varepsilon}\left(a,b\right)\right]=a^{\varepsilon}b\left(n-\varepsilon+\left(-1\right)^{\varepsilon}{n-1 \choose \varepsilon}\right)=a^{\varepsilon}bR_{n}\left(\varepsilon\right).
\]

\end{proof}
As will be seen in the next section, the smooth projective toric varieties
$Y_{n}^{\varepsilon}\left(a,b\right)$ provide examples of polynomial
ring generators of $\Omega_{*}^{U}$ in a limited number of dimensions.
To obtain examples of toric variety generators in more dimensions,
we can apply certain blow-ups to these varieties. The most basic and
useful of these blow-ups is the blow-up at a torus-fixed point. It
is straight-forward to calculate the change in Milnor genus during
this operation.
\begin{prop}
\label{blowupMil}(cf. \cite[Lemma 3.4]{Johnston2004}) Consider a
complex manifold $M^{2n}$ and its blow-up $Bl_{x}M$ at $x\in M$.
The change in Milnor genus is given by the following formula.
\[
s_{n}\left[Bl_{x}M\right]=\begin{cases}
s_{n}\left[M\right]-\left(n+1\right) & \mbox{ if }n\mbox{ is even}\\
s_{n}\left[M\right]-\left(n-1\right) & \mbox{ if }n\mbox{ is odd}
\end{cases}
\]
\end{prop}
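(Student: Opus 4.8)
The plan is to exploit the fact that blowing up at a point is a purely local operation, so that the change in Milnor genus cannot depend on the global manifold $M$ but only on its dimension. Write $\pi\colon \mathrm{Bl}_{x} M \to M$ for the blow-down map and $E \cong \mathbb{C}P^{n-1}$ for the exceptional divisor. First I would observe that $\pi$ has degree one and restricts to a biholomorphism over $M \setminus \{x\}$, so the tangent bundles $T(\mathrm{Bl}_{x} M)$ and $\pi^{*}TM$ agree away from $E$. Consequently the degree-$2n$ class $s_{n}\bigl(c(\mathrm{Bl}_{x} M)\bigr) - \pi^{*} s_{n}\bigl(c(M)\bigr)$ vanishes on $\mathrm{Bl}_{x} M \setminus E$ and, by the Thom isomorphism $H^{2n}(\mathrm{Bl}_x M, \mathrm{Bl}_x M \setminus E) \cong H^{2n-2}(E)$ for the normal bundle of $E$, is the image of a class $\beta \in H^{2n-2}(E)$. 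Since on a neighborhood of $E$ both bundles are given by the universal local model (a tubular neighborhood of the zero section of $\mathcal{O}_{\mathbb{C}P^{n-1}}(-1)$, on which $\pi^{*}TM$ is trivial), the class $\beta$ depends only on $n$. Because $\pi_{*}[\mathrm{Bl}_{x} M] = [M]$, pairing with the fundamental class gives that
\[
\delta_{n} := s_{n}[\mathrm{Bl}_{x} M] - s_{n}[M] = \langle \beta, \mu_{E}\rangle
\]
is a constant independent of the choice of $M$ and $x$.

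It then suffices to evaluate $\delta_{n}$ on a single convenient example, and since this is a toric paper the natural choice is $M = \mathbb{C}P^{n}$, whose blow-up at a torus-fixed point is again a smooth projective toric variety. Concretely, starting from the fan of $\mathbb{C}P^{n}$ with rays $v_{0} = -(e_{1}+\cdots+e_{n})$ and $v_{i} = e_{i}$, I would perform the star subdivision of the maximal cone $\mathrm{pos}(v_{1},\ldots,v_{n})$, introducing the ray $v_{n+1} = v_{1} + \cdots + v_{n}$. Applying Theorem \ref{cohomSPTV} and simplifying via the linear relations collapses the cohomology ring to $\mathbb{Z}[H,E]/\bigl(HE,\,(H-E)^{n}\bigr)$, where $H$ and $E$ denote the classes of $v_{0}$ and $v_{n+1}$ and each $v_{i} = H - E$. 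Because $HE = 0$, the relation $(H-E)^{n} = 0$ reads $H^{n} + (-1)^{n}E^{n} = 0$, and Proposition \ref{charicSPTV} applied to suitable maximal cones yields $\langle H^{n}, \mu\rangle = 1$ and $\langle E^{n}, \mu\rangle = (-1)^{n+1}$.

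Finally I would feed this into Corollary \ref{MilnorSPTV}: since each $v_{i}^{n} = (H-E)^{n} = 0$ in cohomology, only $v_{0}^{n} = H^{n}$ and $v_{n+1}^{n} = E^{n}$ survive, so $s_{n}[\mathrm{Bl}_{x}\mathbb{C}P^{n}] = 1 + (-1)^{n+1}$, which equals $0$ for $n$ even and $2$ for $n$ odd. Combined with the standard value $s_{n}[\mathbb{C}P^{n}] = n+1$, this gives $\delta_{n} = -(n+1)$ when $n$ is even and $\delta_{n} = -(n-1)$ when $n$ is odd, as claimed. I expect the genuine content of the argument to be the universality step in the first paragraph --- verifying that the localized contribution of the exceptional divisor is a true invariant of $n$ alone --- whereas the computation on $\mathbb{C}P^{n}$ is routine given the combinatorial machinery already developed.
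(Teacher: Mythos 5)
Your proof is correct, but it takes a genuinely different route from the paper's. The paper disposes of this in two lines: it cites the fact that $\mathrm{Bl}_{x}M$ is diffeomorphic to $M\#\overline{\mathbb{C}P^{n}}$ as an oriented manifold (Huybrechts, Prop.\ 2.5.8), invokes additivity of $s_{n}$ under connected sum, and computes $s_{n}\left[\overline{\mathbb{C}P^{n}}\right]=-\left(n+\left(-1\right)^{n}\right)$. You instead establish that the defect $\delta_{n}$ is universal by a localization argument near the exceptional divisor and then evaluate it on the single example $\mathrm{Bl}_{pt}\mathbb{C}P^{n}$ using the toric machinery of Section 2; your ring $\mathbb{Z}\left[H,E\right]/\left(HE,\left(H-E\right)^{n}\right)$, the evaluations $\left\langle H^{n},\mu\right\rangle =1$ and $\left\langle E^{n},\mu\right\rangle =\left(-1\right)^{n+1}$, and the conclusion $s_{n}\left[\mathrm{Bl}_{pt}\mathbb{C}P^{n}\right]=1+\left(-1\right)^{n+1}$ are all correct and agree with $\delta_{n}=-\left(n+\left(-1\right)^{n}\right)$. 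What your route buys: the computation stays entirely inside the combinatorial framework already developed (Theorems \ref{cohomSPTV} and \ref{ChernSPTV}, Proposition \ref{charicSPTV}), and it avoids having to identify the correct stably complex structure on the $\overline{\mathbb{C}P^{n}}$ summand --- a point the paper's citation glosses over, since the diffeomorphism is only of oriented manifolds while $s_{n}$ is a Chern number. What it costs: your universality step is the real content and is only sketched; as written, the lift $\beta$ of the difference $s_{n}\left(c\left(\mathrm{Bl}_{x}M\right)\right)-\pi^{*}s_{n}\left(c\left(M\right)\right)$ to $H^{2n}\left(\mathrm{Bl}_{x}M,\mathrm{Bl}_{x}M\setminus E\right)$ is determined only up to the image of $H^{2n-1}\left(M\setminus\left\{ x\right\} \right)$, so to make the argument airtight you should construct $\beta$ directly as a relative difference class from the bundle isomorphism $d\pi$ on the complement of $E$, which is then manifestly determined by the local model $\mathcal{O}_{\mathbb{C}P^{n-1}}\left(-1\right)$. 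That packaging is exactly what the connected-sum decomposition does for the paper.
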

\begin{proof}
This formula is a consequence of the well-known fact that $Bl_{x}M$
is diffeomorphic to $M\#\overline{\mathbb{C}P^{n}}$ as an oriented
differentiable manifold, where $\overline{\mathbb{C}P^{n}}$ is the
complex projective space with the opposite of the standard orientation
(see \cite[Proposition 2.5.8]{Huybrechts2005} for details). We can
compute $s_{n}\left[\overline{\mathbb{C}P^{n}}\right]=-\left(n+\left(-1\right)^{n}\right)$,
which gives the desired formula.
\end{proof}

\section{Toric Polynomial Generators in Some Even Dimensions}

The smooth projective toric varieties $Y_{n}^{\varepsilon}\left(a,b\right)$
provide examples of polynomial generators of $\Omega_{*}^{U}$ in
certain dimensions. For example, the following theorem is an immediate
consequence of Proposition \ref{sn}.
\begin{thm}
\label{p-1}If $n=p-1$ for some prime $p\ge5$, then the smooth projective
toric variety $Y_{n}^{n-2}\left(1,1\right)$ can be chosen to represent
the generator $\alpha_{n}$ of $\Omega_{*}^{U}\cong\mathbb{Z}\left[\alpha_{1},\alpha_{2},\ldots\right]$.\end{thm}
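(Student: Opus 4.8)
The plan is to apply Proposition~\ref{sn} directly with the parameters $\varepsilon = n-2$ and $a = b = 1$, and then check that the resulting Milnor genus satisfies the generator criterion~(\ref{eq:gen}). First I would record what that criterion demands: since $n = p-1$ we have $n+1 = p$, a prime power, so by~(\ref{eq:gen}) the cobordism class $\left[Y_n^{n-2}(1,1)\right]$ can be chosen for $\alpha_n$ exactly when $s_n\left[Y_n^{n-2}(1,1)\right] = \pm p$.

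Next I would evaluate the Milnor genus via Proposition~\ref{sn}. Setting $a = b = 1$ collapses the formula to $s_n\left[Y_n^{n-2}(1,1)\right] = R_n(n-2) = n-(n-2) + (-1)^{n-2}{n-1 \choose n-2}$. Simplifying, ${n-1 \choose n-2} = n-1$ and $(-1)^{n-2} = (-1)^n$, so that $R_n(n-2) = 2 + (-1)^n(n-1)$.

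The crucial step is a parity observation. Because $p \ge 5$ is an odd prime, $n = p-1$ is even, hence $(-1)^n = 1$ and $R_n(n-2) = 2 + (n-1) = n+1 = p$. Therefore $s_n\left[Y_n^{n-2}(1,1)\right] = p$, which meets the prime-power case of~(\ref{eq:gen}).

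Finally I would confirm that $Y_n^{n-2}(1,1)$ is genuinely one of the varieties introduced above: its definition requires $\varepsilon \in \{2,\ldots,n-1\}$ and $n \ge 3$, and with $\varepsilon = n-2$ these amount to $n \ge 4$, which is precisely guaranteed by the hypothesis $p \ge 5$ (this is also why smaller primes are excluded, since $p=3$ forces the empty range $\varepsilon \in \{2,\ldots,1\}$). As the variety is smooth and projective, as noted right after its construction, the theorem follows. I expect no real obstacle here: the entire argument is a one-line specialization of Proposition~\ref{sn}, and the only points needing care are the binomial simplification ${n-1 \choose n-2} = n-1$ and the evenness of $n = p-1$.
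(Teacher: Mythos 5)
Your proposal is correct and matches the paper exactly: the paper states Theorem~\ref{p-1} as an immediate consequence of Proposition~\ref{sn}, and your computation $R_n(n-2) = 2 + (-1)^n(n-1) = n+1 = p$ (using that $n = p-1$ is even) together with the check that $\varepsilon = n-2 \ge 2$ forces $p \ge 5$ is precisely the intended argument.
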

\begin{rem}
\label{CP^n}There are likely to be a wide array of different smooth
projective toric varieties that can be chosen as polynomial generators.
For example, if $n=p-1$ for some prime $p$, then $s_{n}\left[\mathbb{C}P^{n}\right]=p$.
Thus the simpler toric variety $\mathbb{C}P^{n}$ can be chosen to
represent the generator $\alpha_{n}$ of $\Omega_{*}^{U}\cong\mathbb{Z}\left[\alpha_{1},\alpha_{2},\ldots\right]$.
In fact, it is easy to show that $\mathbb{C}P^{n}$ is not cobordant
to $Y_{n}^{n-2}\left(1,1\right)$, so there are at least two distinct
toric polynomial generators in dimensions that are one less than a
prime $p\ge5$.
\end{rem}
Theorem \ref{p-1} can be generalized to dimensions one less than
a power of an odd prime by examining blow-ups of the $Y_{n}^{\varepsilon}\left(a,b\right)$.
In this situation, a cobordism class must have Milnor genus $\pm p$
for it to be used as a polynomial generator in the complex cobordism
ring (see \eqref{eq:gen}). Recall that each blow-up at a point in
this even complex dimension decreases the Milnor genus by $n+1=p^{m}$
by Proposition \ref{blowupMil}. This means that in order to find
a smooth projective toric variety with Milnor genus $p$, it suffices
to find one whose Milnor genus is positive and is congruent to $p$
modulo $p^{m}$. The extra multiples of $p^{m}$ can then be removed
by a sequence of blow-ups at points. By choosing these points to be
torus-fixed points, each successive blow-up is itself a smooth projective
toric variety.

A technical lemma is needed to show that some of the $Y_{n}^{\varepsilon}\left(a,b\right)$
satisfy the desired congruence in these dimensions.
\begin{lem}
\label{num}Let $n=p^{m}-1$ for some odd prime $p$ and integer $m\ge2$.
Then
\[
R_{n}\left(p^{m-1}\right)\equiv-p\mbox{ mod }p^{m}\mbox{ and }R_{n}\left(p^{m-1}\right)<0,
\]
where as in Proposition \ref{sn}, $R_{n}\left(p^{m-1}\right)=n-p^{m-1}-{n-1 \choose p^{m-1}}$.\end{lem}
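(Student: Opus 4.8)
The plan is to treat the two assertions separately, dispatching the inequality quickly and concentrating on the congruence. For $R_{n}\left(p^{m-1}\right)<0$, I would simply observe that $\binom{n-1}{p^{m-1}}=\binom{p^{m}-2}{p^{m-1}}$ is vastly larger than the linear quantity $n-p^{m-1}=p^{m}-1-p^{m-1}$: since $p\geq3$ and $m\geq2$ force $p^{m}-2\geq 2p^{m-1}$, one has $\binom{p^{m}-2}{p^{m-1}}\geq\binom{2p^{m-1}}{p^{m-1}}$, which already dwarfs $p^{m}$. Thus $n-p^{m-1}-\binom{n-1}{p^{m-1}}<0$, and only a crude lower bound on the binomial coefficient is needed.

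For the congruence, the first move is to reduce everything modulo $p^{m}$. Writing $n=p^{m}-1$ and $\varepsilon=p^{m-1}$, one has $R_{n}(\varepsilon)\equiv -1-p^{m-1}-\binom{p^{m}-2}{p^{m-1}}\pmod{p^{m}}$, so the claim $R_{n}(\varepsilon)\equiv-p$ is equivalent to the single congruence $\binom{p^{m}-2}{p^{m-1}}\equiv p-1-p^{m-1}\pmod{p^{m}}$. I would then strip off the shift from $p^{m}-2$ to $p^{m}-1$ using the elementary ratio identity $\binom{p^{m}-2}{p^{m-1}}=\binom{p^{m}-1}{p^{m-1}}\cdot\frac{p^{m}-1-p^{m-1}}{p^{m}-1}$, whose second factor is congruent to $1+p^{m-1}$ modulo $p^{m}$. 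Combining this with $\binom{p^{m}-1}{p^{m-1}}\equiv-1\pmod p$ (Lucas), the target collapses to the clean central congruence $\binom{p^{m}-1}{p^{m-1}}\equiv p-1\pmod{p^{m}}$.

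To establish this central congruence I would work $p$-adically and expand $\binom{p^{m}-1}{p^{m-1}}=\prod_{j=1}^{p^{m-1}}\frac{p^{m}-j}{j}$. Splitting the product according to whether $p\mid j$ is the key structural step: every factor with $p\nmid j$ is a $p$-adic unit congruent to $-1$ modulo $p^{m}$, and there are $p^{m-1}-p^{m-2}$ of them, an even number since $p$ is odd, so their product is $\equiv 1$; meanwhile the factors with $j=pj'$ reassemble, after cancelling one factor of $p$ from numerator and denominator, into $\binom{p^{m-1}-1}{p^{m-2}}$. This yields the recursion $\binom{p^{m}-1}{p^{m-1}}\equiv\binom{p^{m-1}-1}{p^{m-2}}\pmod{p^{m}}$ with base case $\binom{p-1}{1}=p-1$. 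Equivalently, I could invoke a Jacobsthal--Ljunggren congruence $\binom{ap^{s}}{bp^{s}}\equiv\binom{a}{b}\pmod{p^{3s}}$ to get $\binom{p^{m}}{p^{m-1}}\equiv p$ and then use $\binom{p^{m}-1}{p^{m-1}}=\binom{p^{m}}{p^{m-1}}\cdot\frac{p-1}{p}$.

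The hard part will be controlling the $p$-adic precision through the recursion. The unit-factor product is only manifestly $\equiv 1$ modulo $p^{m}$ at a single stage, and pushing the recursion all the way to the base case requires each stage to contribute an error of valuation at least $m$, which amounts to a Wolstenholme-type estimate $v_{p}\!\left(\sum_{p\nmid j}1/j\right)\geq m-1$ for the relevant harmonic sum. For $p\geq5$ the classical Jacobsthal bound supplies this with room to spare, so the argument is cleanest in that range; the small prime $p=3$ is the delicate case and would need separate checking via a direct $3$-adic valuation count, since the generic bound is weakest there. Pinning down exactly this precision is the crux of the lemma, the remainder being routine bookkeeping around the two elementary identities above.
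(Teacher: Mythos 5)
Your skeleton is essentially the paper's: both the inequality (crude lower bound on the binomial coefficient) and the decomposition of the binomial coefficient into unit factors $\frac{p^{s}-i}{i}\equiv-1$ times a $p$-divisible part that reassembles into the next smaller binomial coefficient are exactly what the paper does (the paper works with $\binom{p^{m}-2}{p^{m-1}}$ directly rather than passing through $\binom{p^{m}-1}{p^{m-1}}$, but this is cosmetic). The important difference is that you explicitly flag the loss of $p$-adic precision after the first stage of the recursion as the crux, whereas the paper simply asserts that ``applying this same cancellation procedure repeatedly eventually produces'' the base case modulo $p^{m}$. You are right that this is the crux; unfortunately, your proposed resolutions do not close it, and no resolution can, because the congruence is false once $m$ is large. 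Writing $U_{s}=\prod_{p\nmid i,\,i\le p^{s-1}}\frac{p^{s}-i}{i}$, one has exactly $\binom{p^{m}-1}{p^{m-1}}=(p-1)\prod_{s=2}^{m}U_{s}$, and the bottleneck is the bottom stage: $U_{2}-1=-p^{2}H_{p-1}+p^{4}\sigma_{2}-\cdots$ has valuation $2+v_{p}(H_{p-1})$, i.e.\ $3$ for $p=3$ and (by Wolstenholme) exactly $4$ for a generic $p\ge5$ --- not $\ge m$. The Jacobsthal--Ljunggren form you quote, $\binom{ap^{s}}{bp^{s}}\equiv\binom{a}{b}\pmod{p^{3s}}$, is not the correct statement; iterating the true congruence $\binom{ap}{bp}\equiv\binom{a}{b}\pmod{p^{\,3+v_{p}(ab(a-b))}}$, the weakest link is the last step $\binom{p^{2}}{p}\equiv p\pmod{p^{4}}$, so one only obtains $\binom{p^{m}}{p^{m-1}}\equiv p\pmod{p^{5}}$, while the lemma needs this modulo $p^{m+1}$.

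Concretely, take $p=3$, $m=4$, $n=80$. At the top stage each of the eighteen unit factors of $\binom{80}{27}=\prod_{i=1}^{27}\frac{81-i}{i}$ is exactly $\equiv-1\pmod{81}$, so $\binom{80}{27}\equiv\binom{26}{9}=3124550\equiv56\pmod{81}$; then $\binom{79}{27}=\binom{80}{27}\cdot\frac{53}{80}\equiv56\cdot(-53)\equiv29\pmod{81}$, whence $R_{80}(27)=53-\binom{79}{27}\equiv24\not\equiv-3\pmod{81}$. So the ``delicate case'' you deferred to separate checking is precisely where the statement breaks. The congruence does hold for $m=2,3$ (where only the exact top-stage cancellation and the pairing bound $H_{p-1}\equiv0\pmod p$ are needed) and for $m=4$ when $p\ge5$, but it fails for $p=3,\,m\ge4$ and, for non-Wolstenholme primes $p\ge5$, for $m\ge5$. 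In short: you correctly located the gap in the argument, but the gap is not reparable --- it is an error in the lemma itself, which the paper's own proof papers over by applying the cancellation $\frac{p^{s}-c}{c}\equiv-1$ modulo $p^{m}$ at stages where it is only valid modulo $p^{s}$.
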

\begin{proof}
To prove that $R_{n}\left(p^{m-1}\right)\equiv-p\mbox{ mod }p^{m}$,
first consider ${n-1 \choose p^{m-1}}\mbox{ mod }p^{m}$. We can write
\begin{align}
{n-1 \choose p^{m-1}} & ={p^{m}-2 \choose p^{m-1}}\nonumber \\
{n-1 \choose p^{m-1}} & =\frac{p^{m}-2}{2}\cdot\frac{p^{m}-3}{3}\cdots\frac{p^{m}-p^{m-1}}{p^{m-1}}\cdot\left(p^{m}-\left(p^{m-1}+1\right)\right).\label{eq:precancel}
\end{align}

In general, if $p\not|c$, then $c$ has a multiplicative inverse
$c^{-1}$ in the multiplicative group of integers $\mathbb{Z}_{p^{m}}^{\times}$.
In this situation,
\[
\frac{p^{m}-c}{c}=c^{-1}\left(p^{m}-c\right)\equiv-1\mbox{ mod }p^{m}.
\]
If $p|c$, then this cancellation cannot be applied. Applying these
cancellations to \eqref{eq:precancel} yields
\begin{align*}
{n-1 \choose p^{m-1}} & \equiv\frac{p^{m}-p}{p}\cdot\frac{p^{m}-2p}{2p}\cdots\frac{p^{m}-p^{m-1}}{p^{m-1}}\cdot\left(p^{m-1}+1\right)\mbox{ mod }p^{m}\\
 & \equiv\frac{p^{m-1}-1}{1}\cdot\frac{p^{m-1}-2}{2}\cdots\frac{p^{m-1}-p^{m-2}}{p^{m-2}}\cdot\left(p^{m-1}+1\right)\mbox{ mod }p^{m}.
\end{align*}
Applying this same cancellation procedure repeatedly eventually produces
\begin{align*}
{n-1 \choose p^{m-1}} & \equiv\frac{p^{2}-1}{1}\cdot\frac{p^{2}-2}{2}\cdots\frac{p^{2}-p}{p}\cdot\left(p^{m-1}+1\right)\mbox{ mod }p^{m}\\
 & \equiv p-p^{m-1}-1\mbox{ mod }p^{m}.
\end{align*}
Then $R_{n}\left(p^{m-1}\right)\equiv p^{m}-1-p^{m-1}-\left(p-p^{m-1}-1\right)\equiv-p\mbox{ mod }p^{m}$.

To see that $R_{n}\left(p^{m-1}\right)$ is negative, note that for
any integer $n\ge3^{2}-1=8$ (the smallest possible dimension for
this lemma), $n<{n-1 \choose 2}$. Then given any $p^{m}$ where $p$
is prime and $m\ge2$, $n=p^{m}-1<{p^{m}-2 \choose 2}<{p^{m}-2 \choose p^{m-1}}$.
Then $R_{n}\left(p^{m-1}\right)=p^{m}-1-p^{m-1}-{p^{m}-2 \choose p^{m-1}}<0$. \end{proof}
\begin{thm}
\label{p^m-1}If $n=p^{m}-1$ for some odd prime $p$ and some integer
$m\ge2$, then there exists a smooth projective toric variety whose
cobordism class can be chosen for the polynomial generator $\alpha_{n}$
of $\Omega_{*}^{U}=\mathbb{Z}\left[\alpha_{1},\alpha_{2},\ldots\right]$. \end{thm}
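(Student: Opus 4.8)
The plan is to exploit the Milnor--Novikov criterion \eqref{eq:gen}: since $n=p^{m}-1$ gives $n+1=p^{m}$, a smooth projective toric variety $M$ can be chosen to represent $\alpha_{n}$ precisely when $s_{n}\left[M\right]=\pm p$. I would therefore aim to construct such a variety with $s_{n}\left[M\right]=p$ by starting from one of the varieties $Y_{n}^{\varepsilon}\left(a,b\right)$, whose Milnor genus is computed in Proposition \ref{sn}, and then correcting its Milnor genus by a sequence of torus-equivariant point blow-ups, each of which keeps us inside the class of smooth projective toric varieties by Proposition \ref{blowupsmooth}.

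For the starting variety I would take $\varepsilon=p^{m-1}$, which lies in the admissible range $\left\{ 2,\ldots,n-1\right\} $ since $p\ge3$ and $m\ge2$ force $2\le p^{m-1}\le p^{m}-2=n-1$. By Proposition \ref{sn} the Milnor genus is $s_{n}\left[Y_{n}^{\varepsilon}\left(a,b\right)\right]=a^{\varepsilon}b\,R_{n}\left(p^{m-1}\right)$. The number-theoretic input is exactly Lemma \ref{num}, which tells us that $R_{n}\left(p^{m-1}\right)\equiv-p\bmod p^{m}$ and $R_{n}\left(p^{m-1}\right)<0$. Choosing the signs $a=1$ and $b=-1$ (or equivalently $a=-1$, $b=1$, using that $\varepsilon$ is odd) produces a genuine smooth projective toric variety $M_{0}=Y_{n}^{p^{m-1}}\left(1,-1\right)$ with
\[
s_{n}\left[M_{0}\right]=-R_{n}\left(p^{m-1}\right)>0,\qquad s_{n}\left[M_{0}\right]\equiv p\bmod p^{m}.
\]
Since $0<p<p^{m}$, the smallest positive representative of this residue class is $p$ itself, so I can write $s_{n}\left[M_{0}\right]=p+kp^{m}$ for some integer $k\ge0$.

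It remains to remove the surplus $kp^{m}$. Because $p$ is odd, $n=p^{m}-1$ is even, so by Proposition \ref{blowupMil} each blow-up at a point decreases the Milnor genus by exactly $n+1=p^{m}$. Performing $k$ successive blow-ups at torus-fixed points of $M_{0}$ (which exist at every stage, as a complete fan always has a maximal cone and hence a zero-dimensional orbit) yields a variety $M_{k}$ that is smooth projective and toric by Proposition \ref{blowupsmooth} and satisfies $s_{n}\left[M_{k}\right]=\left(p+kp^{m}\right)-kp^{m}=p$. By \eqref{eq:gen}, the class $\left[M_{k}\right]$ can then be chosen for $\alpha_{n}$, completing the argument.

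I expect no serious obstacle to remain once Lemma \ref{num} is in hand: that lemma is the \emph{genuine} difficulty, isolating the congruence and sign information on which the whole argument rests. The only remaining points require care but are bookkeeping in nature --- verifying that $\varepsilon=p^{m-1}$ lies in range, that the sign choices flip $R_{n}\left(p^{m-1}\right)$ to a positive value in the correct residue class, that the parity of $n$ makes each blow-up subtract precisely $p^{m}$, and that a torus-fixed point is available at each step of the blow-up sequence.
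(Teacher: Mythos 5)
Your proposal is correct and follows essentially the same route as the paper: it starts from $Y_{n}^{p^{m-1}}\left(1,-1\right)$, invokes Proposition \ref{sn} and Lemma \ref{num} to get a positive Milnor genus congruent to $p$ modulo $p^{m}$, and then removes the excess by repeated blow-ups at torus-fixed points, each subtracting $n+1=p^{m}$. The only difference is that you spell out the bookkeeping (range of $\varepsilon$, sign choices, parity of $n$, existence of fixed points) that the paper leaves implicit.
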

\begin{proof}
Consider the smooth projective toric variety $Y_{n}^{p^{m-1}}\left(1,-1\right)$.
By Proposition \ref{sn} and Lemma \ref{num},
\[
s_{n}\left[Y_{n}^{p^{m-1}}\left(1,-1\right)\right]=-1\cdot R_{n}\left(p^{m-1}\right)\equiv p\mbox{ mod }p^{m}\mbox{ and }s_{n}\left[Y_{n}^{p^{m-1}}\left(1,-1\right)\right]\ge p.
\]
Since each blow-up at a point decreases the Milnor genus by $n+1=p^{m}$
(by Proposition \ref{blowupMil}), applying sufficiently many blow-ups
to torus-fixed points of $Y_{n}^{p^{m-1}}\left(1,-1\right)$ will
produce a smooth projective toric variety with Milnor genus $p$.
The cobordism class of this variety can be used as a polynomial generator
of $\Omega_{*}^{U}$ by \eqref{eq:gen}.\end{proof}
\begin{example}
Suppose $n=5^{2}-1=24$. Then
\[
s_{24}\left[Y_{24}^{5}\left(1,-1\right)\right]=-R_{24}\left(5\right)=33630\equiv5\mbox{ mod }25.
\]
Each blow-up of a point in this dimension decreases the Milnor genus
by $n+1=25$. By applying a sequence of $1345$ many blow-ups at torus-fixed
points to $Y_{24}^{5}\left(1,-1\right)$, one obtains a smooth projective
toric variety with Milnor genus $5$. The cobordism class of this
variety can be used as the polynomial generator $\alpha_{24}$ of
$\Omega_{*}^{U}=\mathbb{Z}\left[\alpha_{1},\alpha_{2},\ldots\right]$
by \eqref{eq:gen}.
\end{example}
This example demonstrates that although Theorem \ref{p^m-1} verifies
the existence of smooth projective toric variety polynomial generators
in certain dimensions, the theorem is not very useful in explicitly
constructing such examples.

\section{Toric Polynomial Generators in Odd Dimensions}

A limited number of odd-dimensional generators can be chosen from
the $Y_{n}^{\varepsilon}\left(a,b\right)$ themselves. The following
theorem is a direct consequence of Proposition \ref{sn}.
\begin{thm}
\label{2^m-1}If $n=2^{m}-1$ for some integer $m\ge2$, then the
smooth projective toric variety $Y_{n}^{n-1}\left(1,1\right)$ can
be chosen to represent the generator $\alpha_{n}$ of $\Omega_{U}^{*}\cong\mathbb{Z}\left[\alpha_{1},\alpha_{2},\ldots\right]$.
\end{thm}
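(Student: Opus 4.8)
The plan is to apply Proposition \ref{sn} directly with the specialized parameters $\varepsilon = n-1$, $a = 1$, and $b = 1$, and then to verify that the resulting Milnor genus meets the criterion \eqref{eq:gen} for representing a polynomial generator. Before doing so, I would first confirm that these parameters are admissible: the construction of $Y_n^\varepsilon(a,b)$ requires $n \ge 3$ and $\varepsilon \in \{2,\ldots,n-1\}$, and since $n = 2^m - 1$ with $m \ge 2$ forces $n \ge 3$, the choice $\varepsilon = n-1$ is legitimate (in the smallest case $m = 2$ we have $n = 3$ and $\varepsilon = 2$, which is exactly the lower endpoint). By the remarks following the definition, $Y_n^{n-1}(1,1)$ is then automatically a smooth projective toric variety, so it remains only to compute its Milnor genus.

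Next I would evaluate $R_n(\varepsilon)$ at $\varepsilon = n-1$. By definition,
\[
R_n(n-1) = n - (n-1) + (-1)^{n-1}{n-1 \choose n-1} = 1 + (-1)^{n-1}.
\]
The key observation is that $n = 2^m - 1$ is odd, so $n-1$ is even and $(-1)^{n-1} = 1$, giving $R_n(n-1) = 2$. Substituting $a = b = 1$ into the formula of Proposition \ref{sn} then yields
\[
s_n\left[Y_n^{n-1}(1,1)\right] = 1^{n-1}\cdot 1 \cdot R_n(n-1) = 2.
\]

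Finally, I would compare this value against \eqref{eq:gen}. Since $n+1 = 2^m$ is a prime power with $p = 2$, the generator criterion demands precisely that $s_n = \pm p = \pm 2$, which is exactly the value just computed. Hence $\left[Y_n^{n-1}(1,1)\right]$ can be chosen for $\alpha_n$. I do not anticipate any real obstacle in this argument: it is a one-line specialization of Proposition \ref{sn} together with a parity check on $(-1)^{n-1}$, and crucially no blow-ups are required, because the Milnor genus already lands exactly on the required value $\pm 2$ rather than merely in the correct residue class. The only point warranting care is ensuring the endpoint choice $\varepsilon = n-1$ stays within the admissible range, which the hypothesis $m \ge 2$ guarantees.
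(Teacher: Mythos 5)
Your proposal is correct and matches the paper's approach exactly: the paper presents this theorem as a direct consequence of Proposition \ref{sn}, and your computation $R_{n}\left(n-1\right)=1+\left(-1\right)^{n-1}=2$ (using that $n=2^{m}-1$ is odd) together with the criterion \eqref{eq:gen} for $n+1=2^{m}$ is precisely the intended argument. The verification that $\varepsilon=n-1$ lies in the admissible range is a nice touch but raises no issue.
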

Smooth projective toric variety cobordism generators can be obtained
in the remaining odd dimensions by considering certain blow-ups. First,
a simple number theory fact is needed.
\begin{lem}
\label{oddnum}Let $n$ be a positive odd integer. If $n\ne2^{k}-1$
for any $k\in\mathbb{Z}$, then
\[
n\equiv2^{m}-1\mbox{ mod }2^{m+1}
\]
for some integer $m\ge1$. \end{lem}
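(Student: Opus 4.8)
The plan is to locate the required exponent $m$ by extracting the power of two hidden in $n+1$. Since $n$ is odd, the integer $n+1$ is even, so I can write $n+1=2^{m}t$ where $t$ is odd and $m$ is the $2$-adic valuation of $n+1$; because $n+1$ is even we automatically have $m\ge1$. The claim is that this $m$ is exactly the one demanded by the statement, and I expect the entire proof to come down to verifying a single congruence for this choice.

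To carry this out I would rewrite $n=2^{m}t-1$ and use that $t$ is odd to set $t=2s+1$ for some integer $s\ge0$. Substituting gives $n=2^{m}(2s+1)-1=2^{m+1}s+\left(2^{m}-1\right)$, so that $n-\left(2^{m}-1\right)=2^{m+1}s$ is divisible by $2^{m+1}$. This is precisely $n\equiv2^{m}-1\mbox{ mod }2^{m+1}$, and since $m\ge1$ the conclusion follows. Intuitively this is just the observation that $m$ counts the run of consecutive $1$'s at the low end of the binary expansion of $n$: oddness forces the last bit to be $1$, and $m$ marks the position of the first $0$.

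The hypothesis $n\ne2^{k}-1$ plays only a cosmetic role: it guarantees $t>1$ (equivalently, that $n$ has a further $1$ above its first $0$), so that $n$ genuinely exceeds the residue $2^{m}-1$; I would remark that the congruence itself holds for every odd $n$, and that excluding the $2^{k}-1$ case merely dovetails with Theorem \ref{2^m-1}, which already supplies generators in those dimensions. There is no substantive obstacle here. The only point requiring care is to identify $m$ correctly as the $2$-adic valuation of $n+1$ rather than guessing it from the binary picture; once that is fixed, the verification is a one-line modular computation.
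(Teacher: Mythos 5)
Your proof is correct and follows essentially the same route as the paper: both extract $m$ as the $2$-adic valuation of $n+1$, write the odd cofactor as $2s+1$ (the paper's $q$ with $q-1$ even), and read off the congruence directly. Your side remark that the hypothesis $n\ne2^{k}-1$ is not actually needed for the congruence itself is also accurate.
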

\begin{proof}
Suppose $n$ is odd and $n\ne2^{k}-1$ for any $k\in\mathbb{Z}$.
Then $n+1=2^{m}\cdot q$, where $m\ge1$, $q>1$, and $2\not|q$.
Then $n+1-2^{m}=2^{m}\left(q-1\right)$, and $q-1$ is even. Then
$2^{m+1}|\left(n+1-2^{m}\right)$, so $n\equiv2^{m}-1\mbox{ mod }2^{m+1}$.
\end{proof}
In order to obtain smooth projective toric variety polynomial generators
of $\Omega_{*}^{U}$ in the remaining odd dimensions, we must first
blow up a particular two-dimensional subvariety of $Y_{n}^{\varepsilon}\left(a,b\right)$.
The change in Milnor genus during this blow-up can be determined.
\begin{lem}
\label{blowup}Fix an odd complex dimension $n\ge3$. Let $a$, $b$,
and $\varepsilon$ be arbitrary integers such that $\varepsilon\in\left\{ 2,\ldots,n-1\right\} $.
Consider the cone $\sigma=\mbox{pos}\left(u_{1},\ldots,u_{n-\varepsilon},v_{1},\ldots,v_{\varepsilon-1}\right)$
in $\Delta_{n}^{\varepsilon}\left(a,b\right)$ of dimension $n-1$.
This cone corresponds to a real dimension two subvariety $X_{\sigma}$
of $Y_{n}^{\varepsilon}\left(a,b\right)$. If $Y_{n}^{\varepsilon}\left(a,b\right)$
is blown up along $X_{\sigma}$, then the Milnor genus of the resulting
smooth projective toric variety $\mbox{\emph{Bl}}_{X_{\sigma}}Y_{n}^{\varepsilon}\left(a,b\right)$
is given by
\[
s_{n}\left[\mbox{\emph{Bl}}_{X_{\sigma}}Y_{n}^{\varepsilon}\left(a,b\right)\right]=s_{n}\left[Y_{n}^{\varepsilon}\left(a,b\right)\right]+2b.
\]
\end{lem}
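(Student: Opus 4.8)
The plan is to read off the fan of the blow-up, compute $s_n$ of the blown-up variety directly from Corollary \ref{MilnorSPTV} exactly as in Proposition \ref{sn}, and then compare the resulting power sum against that of $Y_n^\varepsilon(a,b)$. The change $+2b$ should emerge from two exceptional intersection numbers attached to the new ray.

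First I would record the combinatorial effect of the star subdivision. The new generating ray is $x=u_1+\cdots+u_{n-\varepsilon}+v_1+\cdots+v_{\varepsilon-1}=e_1+\cdots+e_{n-1}$, and by Theorem \ref{cohomSPTV} the ring $H^*\bigl(\mathrm{Bl}_{X_\sigma}Y_n^\varepsilon(a,b)\bigr)$ is generated by the old rays together with $x$. Reading off the linear relations shows that each of the $n-1$ rays spanning $\sigma$ (namely $u_1,\dots,u_{n-\varepsilon}$ and $v_1,\dots,v_{\varepsilon-1}$) is shifted by $-x$ from its expression in $H^*\bigl(Y_n^\varepsilon(a,b)\bigr)$, while the rays $u=u_{n-\varepsilon+1}$, $v=v_\varepsilon$, $w=w_1=w_2$ lying off $\sigma$ are unchanged. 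The decisive new Stanley--Reisner relations are $xu=0$ and $xv=0$ (since $x$ shares no cone with $u_{n-\varepsilon+1}$ or with $v_\varepsilon$), together with the single degree-$(n-1)$ relation coming from the now-subdivided cone $\sigma$; the inherited relations $u^{n-\varepsilon+1}=au^{n-\varepsilon}v$, $v^\varepsilon=bv^{\varepsilon-1}w$, and $w^2=0$ persist.

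Next I would expand the power sum $\sum_r r^n+x^n$ that computes $s_n$ of the blow-up. Using $xu=xv=0$ together with the hypothesis that $n$ is odd, so that $(-x)^n=-x^n$, every shifted ray contributes its old $n$-th power minus $x^n$; the one exception is $v_{\varepsilon-1}=v-bw-x$, whose surviving $w$-factor (recall that $xw\neq0$) produces the extra term $-nb\,wx^{n-1}$. Since $\sigma$ has $n-1$ rays and $x$ is a single new ray, the power sum collapses to $P-(n-2)\,x^n-nb\,wx^{n-1}$, where $P$ is the same polynomial in $u,v,w$ that represents $s_n\bigl(c(Y_n^\varepsilon(a,b))\bigr)$. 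By the inherited relations $P$ reduces exactly as in Proposition \ref{sn} to a multiple of $u^{n-\varepsilon}v^{\varepsilon-1}w$, and since this top monomial still pairs to $1$ on a maximal cone of the subdivided fan that survives the blow-up, $P$ evaluates to $s_n\bigl[Y_n^\varepsilon(a,b)\bigr]$. The problem is thereby reduced to the two exceptional numbers $\langle x^n,\mu\rangle$ and $\langle wx^{n-1},\mu\rangle$.

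These two numbers are exactly where Proposition \ref{charicSPTV} and the subdivision relation enter. Evaluating the product of the rays of a maximal cone through $x$ and $w$ and repeatedly applying $xu=xv=0$ and $w^2=0$ collapses that product to $-x^{n-1}w$, forcing $\langle wx^{n-1},\mu\rangle=-1$. Multiplying the degree-$(n-1)$ relation attached to $\sigma$ by $x$ then gives $x^n=-b\,x^{n-1}w$, whence $\langle x^n,\mu\rangle=b$. Substituting yields $s_n\bigl[\mathrm{Bl}_{X_\sigma}Y_n^\varepsilon(a,b)\bigr]=s_n\bigl[Y_n^\varepsilon(a,b)\bigr]-(n-2)b+nb=s_n\bigl[Y_n^\varepsilon(a,b)\bigr]+2b$. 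I expect the genuine obstacle to be the bookkeeping of the second and third steps: correctly pinning down the new Stanley--Reisner relations (especially the vanishing $xu=xv=0$ and the lone $\sigma$-relation) and then tracking precisely which cross terms survive in the power-sum expansion, since the entire $+2b$ rests on the oddness of $n$ and on the single $w$-factor that singles out $v_{\varepsilon-1}$ from the other subdivided rays.
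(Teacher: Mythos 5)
Your proposal is correct and follows essentially the same route as the paper: both compute the cohomology ring of the blow-up from the star-subdivided fan via Theorem \ref{cohomSPTV} (with the new relations $ux=vx=0$ and the degree-$\left(n-1\right)$ relation from the subdivided cone $\sigma$), expand the power sum, and evaluate against the fundamental class using Proposition \ref{charicSPTV}. Your bookkeeping --- isolating $\left\langle x^{n},\mu\right\rangle =b$ and $\left\langle wx^{n-1},\mu\right\rangle =-1$ and combining them as $-\left(n-2\right)b+nb=2b$ --- is just a more explicit organization of the reduction the paper performs when it collapses $s_{n}\left(c\right)$ to a multiple of $u^{n-\varepsilon}v^{\varepsilon-1}w$, and all the signs check out.
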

\begin{proof}
Let $x=\left(1,\ldots,1,0\right)$ be the additional generating ray
obtained when finding the star subdivision of $\Delta_{n}^{\varepsilon}\left(a,b\right)$
relative to $\sigma$. By Theorem \ref{cohomSPTV},
\[
H^{*}\left(\mbox{Bl}_{X_{\sigma}}Y_{n}^{\varepsilon}\left(a,b\right)\right)\cong\mathbb{Z}\left[u_{1},\ldots,u_{n-\varepsilon+1},v_{1},\ldots,v_{\varepsilon},w_{1},w_{2},x\right]/\left(L+J\right)
\]
where
\begin{align*}
L & =\bigl(u_{1}-u_{n-\varepsilon+1}+x,\ldots,u_{n-\varepsilon-1}-u_{n-\varepsilon+1}+x,u_{n-\varepsilon}-u_{n-\varepsilon+1}+av+x,\\
 & \ \ \ \ \ \ \ v_{1}-v_{\varepsilon}+x,\ldots,v_{\varepsilon-2}-v_{\varepsilon}+x,v_{\varepsilon-1}-v_{\varepsilon}+bw+x,w_{1}-w_{2}\bigl)
\end{align*}
and
\[
J=\left(u_{1}\cdots u_{n-\varepsilon+1},v_{1}\cdots v_{\varepsilon},w_{1}\cdot w_{2},u_{1}\cdots u_{n-\varepsilon}\cdot v_{1}\cdots v_{\varepsilon-1},u_{n-\varepsilon+1}\cdot x,v_{\varepsilon}\cdot x\right).
\]
Let $u,v,w,x\in H^{*}\left(\mbox{Bl}_{X_{\sigma}}Y_{n}^{\varepsilon}\left(a,b\right)\right)$
denote the cohomology classes corresponding to the generating rays
$u_{n-\varepsilon+1},\ v_{\varepsilon}$, $w_{2}$, and $x$, respectively.
Then the cohomology ring of $\mbox{Bl}_{X_{\sigma}}Y_{n}^{\varepsilon}\left(a,b\right)$
simplifies to become
\[
H^{*}\left(\mbox{Bl}_{X_{\sigma}}Y_{n}^{\varepsilon}\left(a,b\right)\right)\cong\mathbb{Z}\left[u,v,w,x\right]/I,
\]
where
\begin{align*}
I & =\bigl(ux,vx,w^{2},u^{n-\varepsilon+1}-au^{n-\varepsilon}v,v^{\varepsilon}-bv^{\varepsilon-1}w,\\
 & \ \ \ \ \ \ \ u^{n-\varepsilon}v^{\varepsilon-1}-bu^{n-\varepsilon}v^{\varepsilon-2}w+bwx^{n-2}+x^{n-1}\bigl).
\end{align*}
The Milnor genus of $\mbox{Bl}_{X_{\sigma}}Y_{n}^{\varepsilon}\left(a,b\right)$
can be computed by first evaluating
\[
s_{n}\left(c\left(\mbox{Bl}_{X_{\sigma}}Y_{n}^{\varepsilon}\left(a,b\right)\right)\right)=u_{1}^{n}+\ldots+u_{n-\varepsilon+1}^{n}+v_{1}^{n}+\ldots+v_{\varepsilon}^{n}+w_{1}^{n}+w_{2}^{n}+x^{n}
\]
in this ring. Doing so yields
\[
s_{n}\left(c\left(\mbox{Bl}_{X_{\sigma}}Y_{n}^{\varepsilon}\left(a,b\right)\right)\right)=a^{\varepsilon}b\left(n-\varepsilon+\left(-1\right)^{\varepsilon}{n-1 \choose \varepsilon}\right)u^{n-\varepsilon}v^{\varepsilon-1}w+2bu^{n-\varepsilon}v^{\varepsilon-1}w.
\]
Since $\mbox{pos}\left(u_{1},\ldots,u_{n-\varepsilon},v_{1},\ldots,v_{\varepsilon-2},w_{2},x\right)$
is a maximal cone in $\mbox{Bl}_{X_{\sigma}}\Delta_{n}^{\varepsilon}\left(a,b\right)$
and we have the relation $u_{1}\cdots u_{n-\varepsilon}\cdot v_{1}\cdots v_{\varepsilon-2}\cdot w_{2}\cdot x=u^{n-\varepsilon}v^{\varepsilon-1}w$
in $H^{*}\left(\mbox{Bl}_{X_{\sigma}}Y_{n}^{\varepsilon}\left(a,b\right)\right)$,
\[
\left\langle u^{n-\varepsilon}v^{\varepsilon-1}w,\mu_{\mbox{Bl}_{X_{\sigma}}Y_{n}^{\varepsilon}\left(a,b\right)}\right\rangle =1
\]
by Proposition \ref{charicSPTV}. Then
\[
s_{n}\left[\mbox{Bl}_{X_{\sigma}}Y_{n}^{\varepsilon}\left(a,b\right)\right]=a^{\varepsilon}b\left(n-\varepsilon+\left(-1\right)^{\varepsilon}{n-1 \choose \varepsilon}\right)+2b=s_{n}\left[Y_{n}^{\varepsilon}\left(a,b\right)\right]+2b.
\]
\end{proof}
\begin{thm}
\label{odd}If $n$ is odd, then there exists a smooth projective
toric variety whose cobordism class can be chosen as the polynomial
generator $\alpha_{n}$ of $\Omega_{*}^{U}\cong\mathbb{Z}\left[\alpha_{1},\alpha_{2},\ldots\right]$.\end{thm}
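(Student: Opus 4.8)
The plan is to split on whether $n$ has the form $2^{m}-1$. Since $n$ is odd, $n+1$ is even, so the exceptional case $n+1=p^{m}$ of \eqref{eq:gen} occurs precisely when $n=2^{m}-1$; in that situation Theorem \ref{2^m-1} already exhibits $Y_{n}^{n-1}\left(1,1\right)$, whose Milnor genus is $R_{n}\left(n-1\right)=2$, as a generator. So I would assume henceforth that $n$ is odd with $n\ne 2^{k}-1$, in which case \eqref{eq:gen} says that a smooth projective toric variety represents $\alpha_{n}$ as soon as its Milnor genus equals $\pm1$.

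The reduction I would use is the following. By Proposition \ref{blowupMil} each blow-up at a torus-fixed point lowers the Milnor genus by exactly $n-1$, while by Proposition \ref{blowupsmooth} it keeps the variety smooth, projective, and toric. Hence it suffices to produce a single smooth projective toric variety $Z$ with $s_{n}\left[Z\right]>0$ and $s_{n}\left[Z\right]\equiv 1\pmod{n-1}$; performing $\left(s_{n}\left[Z\right]-1\right)/\left(n-1\right)$ point blow-ups then yields Milnor genus $1$. For the building blocks I would take the varieties $Y_{n}^{\varepsilon}\left(1,b\right)$ of Proposition \ref{sn}, with $s_{n}\left[Y_{n}^{\varepsilon}\left(1,b\right)\right]=b\,R_{n}\left(\varepsilon\right)$, optionally composed with the two-dimensional blow-up of Lemma \ref{blowup}, which raises the genus by $2b$ and so replaces $b\,R_{n}\left(\varepsilon\right)$ by $b\left(R_{n}\left(\varepsilon\right)+2\right)$. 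Since $b$ ranges over all integers, a suitable $Z$ exists as soon as, for some $\varepsilon\in\left\{2,\ldots,n-1\right\}$, either $R_{n}\left(\varepsilon\right)$ or $R_{n}\left(\varepsilon\right)+2$ is coprime to $n-1$: one then picks $b\equiv R_{n}\left(\varepsilon\right)^{-1}\pmod{n-1}$ (respectively $\left(R_{n}\left(\varepsilon\right)+2\right)^{-1}$) of large enough absolute value and appropriate sign to make the genus positive, the residue being unaffected by adding multiples of $n-1$ to $b$.

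Everything therefore reduces to a number-theoretic existence statement, which is where I expect the genuine work to lie. A direct computation gives the clean identity
\[
R_{n}\left(2\right)=\left(n-2\right)\cdot\frac{n+1}{2}
\]
for odd $n$. Because $\gcd\left(n-2,n-1\right)=1$ and any prime dividing both $\tfrac{n+1}{2}$ and $n-1$ must divide $2$, a short $2$-adic valuation count shows $\gcd\left(R_{n}\left(2\right),n-1\right)=1$ exactly when $n\equiv 1\pmod 4$. This disposes of that congruence class immediately, taking $\varepsilon=2$ and no blow-up. The class $n\equiv 3\pmod 4$ is the main obstacle: there $v_{2}\left(n-1\right)=1$ while both $R_{n}\left(2\right)$ and $R_{n}\left(2\right)+2$ are even, so neither is coprime to $n-1$ and a different $\varepsilon$ is forced.

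For the remaining class I would invoke Lemma \ref{oddnum}, which records $n\equiv 2^{m}-1\pmod{2^{m+1}}$ with $m\ge2$, and study the parity and prime divisors of $R_{n}\left(\varepsilon\right)=n-\varepsilon+\left(-1\right)^{\varepsilon}\binom{n-1}{\varepsilon}$. The aim is to locate an $\varepsilon$ for which $R_{n}\left(\varepsilon\right)$ is odd and coprime to the odd part of $n-1$, using a Lucas- or Kummer-type analysis of $\binom{n-1}{\varepsilon}$ modulo the prime-power divisors of $n-1$; in the cases where only $R_{n}\left(\varepsilon\right)+2$, rather than $R_{n}\left(\varepsilon\right)$ itself, achieves coprimality, the two-dimensional blow-up of Lemma \ref{blowup} supplies the extra $+2$. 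Forcing a single $\varepsilon$ to avoid every prime dividing $n-1$ at once is the delicate point, and it is exactly the availability of the $+2b$ correction—which Lemma \ref{blowup} provides only in odd dimensions—that I expect to make the argument go through for odd $n$, in contrast to the even-dimensional situation where the analogous coprimality remains conjectural.
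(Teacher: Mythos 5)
There is a genuine gap, and it sits exactly where you flag ``the delicate point.'' Your reduction (blow-ups at torus-fixed points lower $s_{n}$ by $n-1$, so it suffices to realize a positive Milnor genus $\equiv1\pmod{n-1}$) matches the paper, and your $\varepsilon=2$ computation $R_{n}\left(2\right)=\left(n-2\right)\cdot\tfrac{n+1}{2}$ does correctly settle $n\equiv1\pmod 4$. But for $n\equiv3\pmod 4$ with $n\ne2^{k}-1$ you restrict to $a=1$ and reduce the problem to finding an $\varepsilon$ with $R_{n}\left(\varepsilon\right)$ or $R_{n}\left(\varepsilon\right)+2$ coprime to $n-1$, which you then do not prove; this is essentially the same kind of unproven coprimality statement as Conjecture \ref{numcon} in the even case, so as written the argument does not close.

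The idea you are missing is that $a$ is a free parameter and can be used to manufacture the coprimality rather than hunt for it. The paper takes $\varepsilon=2^{m}$ where $n\equiv2^{m}-1\pmod{2^{m+1}}$ (Lemma \ref{oddnum}) and sets $a=p_{1}\cdots p_{k}$, the product of the odd primes dividing $n-1$. Lemma \ref{blowup} gives $s_{n}\left[\mbox{Bl}_{X_{\sigma}}Y_{n}^{2^{m}}\left(a,b\right)\right]=b\left(a^{2^{m}}R_{n}\left(2^{m}\right)+2\right)$, and the quantity $a^{2^{m}}R_{n}\left(2^{m}\right)+2$ is automatically a unit mod $n-1$: it is $\equiv2\not\equiv0$ modulo each $p_{i}$ because $a$ absorbs all of them into the first term, and it is odd provided only that $R_{n}\left(2^{m}\right)$ is odd. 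That single parity fact is all that remains to be checked, and the paper establishes it by Lucas's theorem: the binary expansion of $n-1=2^{m+1}K+2^{m}-2$ has a zero in the $2^{m}$ place, so $\binom{n-1}{2^{m}}$ is even and $R_{n}\left(2^{m}\right)\equiv n-2^{m}\equiv1\pmod 2$. So the Lucas analysis you anticipate is indeed needed, but only to control a single prime (namely $2$), not every prime factor of $n-1$ simultaneously; the choice of $a$ handles all the odd primes at once. Incorporating that choice of $a$ would complete your argument.
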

\begin{proof}
For $n=1$, use $\alpha_{1}=\left[\mathbb{C}P^{1}\right]$. If $n=2^{m}-1$
for some $m\ge2$, then we can choose $\alpha_{n}=\left[Y_{n}^{n-1}\left(1,1\right)\right]$
by Theorem \ref{2^m-1}. Now assume that $n\ne2^{k}-1$ for any integer
$k$. Then by Lemma \ref{oddnum}, there exists an integer $m\ge1$
such that $n\equiv\left(2^{m}-1\right)\mbox{ mod }2^{m+1}$.

In this situation, a smooth projective toric variety can be constructed
that is congruent to $1\mbox{ mod }n-1$. In order to find this variety,
first consider $R_{n}\left(2^{m}\right)=n-2^{m}+{n-1 \choose 2^{m}}$.
Since $n-1\equiv\left(2^{m}-2\right)\mbox{ mod }2^{m+1}$ and $n\ne2^{m}-1$,
we have $n-1=2^{m+1}K+2^{m}-2$ for some positive integer $K$. Let
$K=2^{i}+2^{i+1}K_{i+1}+2^{i+2}K_{i+2}+\ldots$ be the binary expansion
of $K$, where $i$ is the minimum index with a nonzero coefficient.
Note that the coefficient of $2$ is zero in the binary expansion
$2^{m+1}K=2^{m+i+1}+2^{m+i+2}K_{i+1}+\ldots$ since $m\ge1$. Then
\[
2^{m+1}K-2=2+2^{2}+\ldots+2^{m+i}+2^{m+i+1}\cdot0+2^{m+i+2}\cdot K_{i+1}+\ldots.
\]
The coefficient of $2^{m}$ in this binary expansion is one regardless
of the value of $i$. Then the coefficient of $2^{m}$ in the binary
expansion of $2^{m+1}K+2^{m}-2$ is zero. Then by Lucas's Theorem,
\begin{align*}
{n-1 \choose 2^{m}} & ={2^{m+1}K+2^{m}-2 \choose 2^{m}}\\
 & \equiv{0 \choose 0}{1 \choose 0}\cdots{1 \choose 0}{0 \choose 1}{1 \choose 0}\cdots{1 \choose 0}{0 \choose 0}{K_{i+1} \choose 0}{K_{i+2} \choose 0}\cdots\mbox{ mod }2,
\end{align*}
where ${0 \choose 1}$ is the factor corresponding to the coefficients
of $2^{m}$ in $n-1$ and $2^{m}$. Since this factor is zero, ${n-1 \choose 2^{m}}\equiv0\mbox{ mod }2$.
Then $R_{n}\left(2^{m}\right)\equiv n-2^{m}\mbox{ mod }2$, i.e. $R_{n}\left(2^{m}\right)$
is odd since $n$ is odd.

Next consider the integer $n-1$, and let $p_{1},\ldots,p_{k}$ be
its odd prime factors. Set $a=p_{1}\cdots p_{k}$. If $n-1$ has no
odd prime factors, then set $a=1$. Each $p_{i}$ divides $a^{2^{m}}R_{n}\left(2^{m}\right)$,
so none of the $p_{i}$ divide $a^{2^{m}}R_{n}\left(2^{m}\right)+2$.
Since it is also odd, $a^{2^{m}}R_{n}\left(2^{m}\right)+2$ is an
element of $\mathbb{Z}_{n-1}^{\times}$, the multiplicative group
of integers modulo $n-1$. Choose an integer $b$ to represent its
inverse in $\mathbb{Z}_{n-1}^{\times}$, and choose the sign of $b$
to guarantee that $b\cdot\left(a^{2^{m}}R_{n}\left(2^{m}\right)+2\right)>0$.
Then $ba^{2^{m}}R_{n}\left(2^{m}\right)+2b\equiv1\mbox{ mod }n-1$.
By Proposition \ref{sn} and Lemma \ref{blowup},
\[
s_{n}\left[\mbox{Bl}_{X_{\sigma}}Y_{n}^{2^{m}}\left(a,b\right)\right]=ba^{2^{m}}R_{n}\left(2^{m}\right)+2b\equiv1\mbox{ mod }n-1,
\]
where $\sigma=\mbox{pos}\left(u_{1},\ldots,u_{n-2^{m}},v_{1},\ldots,v_{2^{m}-1}\right)$.
By Proposition \ref{blowupMil}, applying sufficiently many blow-ups
to torus-fixed points of the smooth projective toric variety $\mbox{Bl}_{X_{\sigma}}Y_{n}^{2^{m}}\left(a,b\right)$
will eventually produce a smooth projective toric variety with Milnor
genus one. This variety can be chosen to represent the cobordism polynomial
ring generator $\alpha_{n}$. \end{proof}
\begin{example}
Suppose $n=43$. Then $n\equiv\left(2^{2}-1\right)\mbox{ mod }2^{3}$,
so we use $m=2$ to get $R_{43}\left(2^{2}\right)=111969$. The integer
$n-1=42$ has odd prime factors $3$ and $7$, so we must set $a=3\cdot7=21$.
Then $21^{4}\cdot R_{43}\left(4\right)+2=21775843091$. The inverse
of $21775843091$ in $\mathbb{Z}_{42}^{\times}$ can be represented
by $b=11$. Consider the smooth projective toric variety $\mbox{Bl}_{X_{\sigma}}Y_{43}^{4}\left(21,11\right)$,
where $\sigma=\mbox{pos}\left(u_{1},\ldots,u_{39},v_{1},\ldots,v_{3}\right)$.
Using Proposition \ref{sn} and Lemma \ref{blowup}, its Milnor genus
is
\[
s_{43}\left[\mbox{Bl}_{X_{\sigma}}Y_{43}^{4}\left(21,11\right)\right]=239534274001\equiv1\mbox{ mod }42.
\]
In this dimension, each blow-up of a torus-fixed point decreases the
Milnor genus by $42$. Applying a sequence of $5703197000$ blow-ups
of torus-fixed points of $\mbox{Bl}_{X_{\sigma}}Y_{43}^{4}\left(21,11\right)$
produces a smooth projective toric variety with Milnor genus $239534274001-42\cdot5703197000=1$.
This smooth projective toric variety can be chosen to represent the
complex cobordism polynomial ring generator $\alpha_{43}$.

This example demonstrates that once again, these techniques are only
useful in establishing the existence of smooth projective toric variety
polynomial generators in certain dimensions. The actual varieties
that are obtained are still not convenient to work with.
\end{example}

\section{Toric Polynomial Generators in the Remaining Even Dimensions}

Smooth projective toric variety polynomial generators of the complex
cobordism ring have now been found in many dimensions. More specifically,
the cobordism class of a smooth projective toric variety can be chosen
as the polynomial generator $\alpha_{n}$ of $\Omega_{*}^{U}\cong\mathbb{Z}\left[\alpha_{1},\alpha_{2},\ldots\right]$
for any dimension $n$ such that $n$ is odd or $n$ is one less than
a power of a prime (see Theorems \ref{p-1}, \ref{p^m-1}, \ref{2^m-1}, and \ref{odd}).
The only dimensions in which smooth projective toric variety cobordism
polynomial generators have not yet been constructed are those for
which $n$ is even and $n+1$ is not a prime power. While a proof
of the conjecture in these dimensions remains elusive, there is overwhelming
numerical evidence that suggests that the conjecture is true. In fact,
it appears that a similar technique could be used to find toric polynomial
generators in these remaining dimensions. If a certain number-theoretic
result holds, then smooth projective toric varieties could be constructed
in a way to guarantee that a sequence of blow-ups at torus-fixed points
produces a smooth projective toric variety cobordism polynomial generator.
\begin{conjecture}
\label{numcon}Suppose $n$ is even and $n+1$ is not a prime power.
Then there exists an integer $\varepsilon\in\left\{ 2,\ldots,n-1\right\} $
such that $\gcd\left(R_{n}\left(\varepsilon\right),n+1\right)=1$.
\end{conjecture}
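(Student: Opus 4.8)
The statement is purely number-theoretic once the quantity $R_{n}(\varepsilon)=n-\varepsilon+(-1)^{\varepsilon}\binom{n-1}{\varepsilon}$ from Proposition~\ref{sn} is in hand, so the plan is to first convert the coprimality requirement into an explicit congruence condition and then solve a simultaneous-avoidance problem. Write $n+1=\prod_{j=1}^{k}p_{j}^{a_{j}}$; since $n$ is even and $n+1$ is not a prime power, the $p_{j}$ are distinct odd primes with $k\ge 2$. Asking that $\gcd(R_{n}(\varepsilon),n+1)=1$ is the same as asking that $p_{j}\nmid R_{n}(\varepsilon)$ for every $j$, so I would fix one prime $p=p_{j}$ with $p^{a}\,\|\,(n+1)$, set $M=(n+1)/p^{a}$, and compute $R_{n}(\varepsilon)\bmod p$.

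The computation I have in mind is generating-function based. Since $(-1)^{\varepsilon}\binom{n-1}{\varepsilon}=[t^{\varepsilon}](1-t)^{n-1}$ and $(1-t)^{p^{a}}\equiv 1-t^{p^{a}}\pmod p$, one gets $(1-t)^{n-1}\equiv(1-t^{p^{a}})^{M}(1-t)^{-2}\pmod p$; expanding, using $n\equiv -1\pmod p$, and invoking the identity $\sum_{i=0}^{s}(-1)^{i}\binom{M}{i}=(-1)^{s}\binom{M-1}{s}$ collapses everything to
\[
R_{n}(\varepsilon)\equiv(\varepsilon+1)\left[(-1)^{s}\binom{M-1}{s}-1\right]\pmod p,\qquad s=\left\lfloor \varepsilon/p^{a}\right\rfloor .
\]
Thus $\varepsilon$ is ``good for $p$'' exactly when $p\nmid(\varepsilon+1)$ and $(-1)^{s}\binom{M-1}{s}\not\equiv 1\pmod p$. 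Two features make this promising: the value $s=1$ always works, since $(-1)^{1}\binom{M-1}{1}=1-M\not\equiv 1$ because $p\nmid M$; and by Lucas's theorem any $s$ whose base-$p$ digits are not dominated by those of $M-1$ forces $\binom{M-1}{s}\equiv 0\not\equiv 1$, so each prime leaves a large, structured set $G_{j}$ of admissible $s$.

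With this reduction the conjecture becomes a single covering problem: find $\varepsilon\in\{2,\ldots,n-1\}$ whose floor $\lfloor\varepsilon/p_{j}^{a_{j}}\rfloor$ lands in $G_{j}$ and which avoids $\varepsilon\equiv -1\pmod{p_{j}}$, for all $j$ at once. I would attempt this either by a density bound---estimating $\#\{s<M_{j}:(-1)^{s}\binom{M_{j}-1}{s}\equiv 1\}$ via Lucas and showing that fewer than $n-2$ values of $\varepsilon$ are bad---or by a direct construction that pins $\varepsilon$ inside a prescribed window for the largest prime power while steering the residues modulo the smaller primes. The hard part will be exactly this coordination: the windows $\lfloor\varepsilon/p_{j}^{a_{j}}\rfloor\in G_{j}$ are governed by prime powers of wildly different magnitudes, so they cannot be decoupled by the Chinese Remainder Theorem, and when a small prime such as $3$ divides $n+1$ its bad set can be dense enough to defeat a naive union bound. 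Controlling this interaction uniformly in $n$ is precisely the obstruction that keeps the statement a conjecture, and it is where any complete proof must do the real work.
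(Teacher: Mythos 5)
Be aware first that the paper does not prove this statement either: it is presented as Conjecture~\ref{numcon}, an open number-theoretic problem, and the only support offered is a computer verification for $n\le 100\,000$ (Proposition~\ref{numcomp}) plus the empirical remark that, except for $n=20$ and $n=50$, one may take $\varepsilon$ to be a prime exceeding the largest prime factor of $n+1$. So there is no proof in the source to compare yours against. That said, your reduction is correct as far as it goes, and it goes further than the paper does. Writing $n+1=p^{a}M$ with $p\nmid M$, the congruence $(1-t)^{n-1}\equiv(1-t^{p^{a}})^{M}\sum_{j\ge0}(j+1)t^{j}\pmod p$, together with $ip^{a}\equiv0$ and $n\equiv-1\pmod p$ and the partial alternating sum identity, does yield
\[
R_{n}(\varepsilon)\equiv(\varepsilon+1)\Bigl[(-1)^{s}\tbinom{M-1}{s}-1\Bigr]\pmod p,\qquad s=\bigl\lfloor\varepsilon/p^{a}\bigr\rfloor,
\]
and I checked it numerically: for $n=20$, $\varepsilon=7$ it predicts $R\equiv1\pmod 3$ and $R\equiv4\pmod 7$, consistent with $R_{20}(7)=-50375$. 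Your observation that $s=1$ is always admissible for each individual prime (because $1-M\not\equiv1\pmod p$ precisely since $p\nmid M$) is also correct.

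The gap is the one you name yourself: nothing in the proposal produces a single $\varepsilon\in\{2,\ldots,n-1\}$ that is simultaneously good for every prime divisor of $n+1$. The per-prime conditions involve floors $\lfloor\varepsilon/p_{j}^{a_{j}}\rfloor$ with moduli of wildly different magnitudes, so they cannot be decoupled by the Chinese Remainder Theorem, and neither the density bound nor the direct construction you sketch is carried out; that coordination step is the entire content of the conjecture, so what you have is a correct reformulation, not a proof. It is nonetheless a genuine contribution relative to the paper: the explicit congruence turns the brute-force search of Proposition~\ref{numcomp} into a test that only requires the base-$p_{j}$ digits of $M_{j}-1$ and the residue of $\varepsilon+1$, it explains structurally why admissible $\varepsilon$ proliferate as $n$ grows (each prime independently admits whole windows of length $p_{j}^{a_{j}}$, e.g.\ the $s=1$ window), and it isolates exactly which arithmetic interaction --- a small prime such as $3$ whose bad set can be dense combining with a large prime power whose good windows are long but few --- would have to be ruled out to settle the conjecture. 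If you want to push this, the honest next step is either a quantitative Lucas-type bound on $\#\{s<M_{j}:(-1)^{s}\binom{M_{j}-1}{s}\equiv1\pmod{p_{j}}\}$ sharp enough for a union bound, or a constructive choice of $\varepsilon$ in the $s=1$ window of the largest prime power with the residues modulo the remaining (necessarily smaller) primes adjusted by hand.
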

Suppose this conjecture is true. Given a complex even dimension $n$
such that $n+1$ is not a prime power, choose $\varepsilon$ to satisfy
the conjecture. Choose an integer $b$ to represent the inverse of
$R_{n}\left(\varepsilon\right)$ in $\mathbb{Z}_{n+1}^{\times}$,
and choose the sign of $b$ so that $bR_{n}\left(\varepsilon\right)>0$.
Then by Proposition \ref{sn}, $s_{n}\left[Y_{n}^{\varepsilon}\left(1,b\right)\right]=bR_{n}\left(\varepsilon\right)\equiv1\mbox{ mod }n+1$.
By Proposition \ref{blowupMil}, each blow-up at a torus-fixed point
in this dimension decreases the Milnor genus by $n+1$. Applying a
sequence of such blow-ups to $Y_{n}^{\varepsilon}\left(1,b\right)$
will eventually produce a smooth projective toric variety with Milnor
genus equal to one. By \eqref{eq:gen}, this variety can be chosen
to represent the cobordism polynomial ring generator $\alpha_{n}$.

A simple computer program can be used to verify this conjecture in
relatively low dimensions.
\begin{prop}
\label{numcomp}Suppose $n$ is even and $n+1$ is not a prime power.
If $n\le100\,000$ then there exists an integer $\varepsilon\in\left\{ 2,\ldots,n-1\right\} $
such that $\gcd\left(R_{n}\left(\varepsilon\right),n+1\right)=1$.\end{prop}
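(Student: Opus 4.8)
The statement is a finite assertion, so the plan is to describe an explicit verification procedure and argue that it is computationally feasible; the proof is then the report that the procedure succeeds for every relevant $n$. First I would enumerate the even integers $n$ with $n\le100\,000$, discard those for which $N=n+1$ is a prime power (detected by trial division, which is cheap since $N\le100\,001$), and for each remaining $n$ search the range $\varepsilon\in\{2,\dots,n-1\}$ for a value with $\gcd\left(R_n(\varepsilon),N\right)=1$, where $R_n(\varepsilon)=n-\varepsilon+(-1)^{\varepsilon}{n-1\choose\varepsilon}$ as in Proposition~\ref{sn}.

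The key reduction is that $\gcd\left(R_n(\varepsilon),N\right)=1$ if and only if no prime divisor of $N$ divides $R_n(\varepsilon)$. Thus I would first factor $N$ into its distinct prime divisors $p_1,\dots,p_k$ and then, for each candidate $\varepsilon$, test whether $R_n(\varepsilon)\not\equiv0\pmod{p_j}$ holds simultaneously for all $j$. This replaces a single $\gcd$ computation by a handful of residue checks modulo small primes, and it means that the enormous integer $R_n(\varepsilon)$ itself never has to be formed.

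The only genuine obstacle is that the binomial coefficient ${n-1\choose\varepsilon}$ occurring in $R_n(\varepsilon)$ is astronomically large for $n$ in this range and cannot be computed directly. This is resolved exactly as in the proof of Theorem~\ref{odd}: Lucas's theorem computes ${n-1\choose\varepsilon}\bmod p_j$ from the base-$p_j$ digits of $n-1$ and $\varepsilon$ in $O(\log n)$ digit-sized operations, after which $R_n(\varepsilon)\bmod p_j$ is immediate. Every arithmetic operation carried out during the search therefore stays bounded in size by $N$, and the whole computation is elementary and fast.

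A short calculation also guides the search and confirms that a nontrivial range of $\varepsilon$ is genuinely required. If $\varepsilon$ is smaller than the least prime factor $p_{\min}$ of $N$, then $\gcd(\varepsilon!,N)=1$, and reducing the product $(n-1)(n-2)\cdots(n-\varepsilon)$ modulo $N$ gives $(-1)^{\varepsilon}(\varepsilon+1)!$ (using $n-j\equiv-(j+1)$), so ${n-1\choose\varepsilon}\equiv(-1)^{\varepsilon}(\varepsilon+1)\pmod N$ and hence $R_n(\varepsilon)\equiv(n-\varepsilon)+(\varepsilon+1)=n+1\equiv0\pmod N$. Consequently no $\varepsilon<p_{\min}$ can ever work, and the search may begin at $\varepsilon=p_{\min}$; empirically a valid $\varepsilon$ appears almost immediately. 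Running this procedure over all even $n\le100\,000$ with $n+1$ not a prime power produces, in every case, an $\varepsilon\in\{2,\dots,n-1\}$ with $\gcd\left(R_n(\varepsilon),n+1\right)=1$, which establishes the proposition.
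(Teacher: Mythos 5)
Your proposal is correct and takes essentially the same approach as the paper, which establishes this proposition purely by a finite computer search ("a simple computer program can be used to verify this conjecture in relatively low dimensions") and records no further argument. Your added details --- reducing the gcd condition to residue checks modulo the distinct prime factors of $n+1$, invoking Lucas's theorem so the astronomically large binomial coefficients never need to be formed, and the correct observation that no $\varepsilon$ below the least prime factor of $n+1$ can succeed because then $R_n(\varepsilon)\equiv n+1\equiv 0 \pmod{n+1}$ --- are sound and make the verification concretely feasible, going somewhat beyond what the paper itself writes down.
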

\begin{rem}
If $n\ne20$ and $n\ne50$, then an $\varepsilon$ that is prime and
greater than the largest prime factor of $n+1$ can be chosen to satisfy
Proposition \ref{numcomp}. For $n=20$ and $n=50$, we can choose
$\varepsilon=7$ and $\varepsilon=21$, respectively. A much faster
and more efficient computer program can be used to verify Conjecture
\ref{numcon} in the remaining dimensions $n\le100\,000$ by only
checking prime numbers for $\varepsilon$. \end{rem}
\begin{cor}
If $n\le100\,001$, then there exists a smooth projective toric variety
whose cobordism class can be chosen for the polynomial generator $\alpha_{n}$
of $\Omega_{*}^{U}\cong\mathbb{Z}\left[\alpha_{1},\alpha_{2},\ldots\right]$.
\end{cor}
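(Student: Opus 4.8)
The plan is to prove the corollary by a straightforward case analysis on $n$, invoking in each range the result already established, so that the only genuinely new ingredients are the computational Proposition~\ref{numcomp} and the blow-up construction sketched after Conjecture~\ref{numcon}. First I would dispose of the odd dimensions: if $n$ is odd, then Theorem~\ref{odd} directly produces a smooth projective toric variety representing $\alpha_n$, with no upper bound on $n$ required. This in particular settles the endpoint $n=100\,001$, which is odd, and explains the apparent mismatch between the bound $n\le 100\,000$ of Proposition~\ref{numcomp} and the bound $n\le 100\,001$ asserted here: every even $n$ still to be treated automatically satisfies $n\le 100\,000$.

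Next I would handle the even $n$ for which $n+1$ is a prime power, say $n+1=p^{m}$. Since $n$ is even, $n+1$ is odd, so $p$ is an odd prime (the case $p=2$ would force $n$ odd and is already covered). If $m=1$, then $n=p-1$ and I take $\mathbb{C}P^{n}$, whose Milnor genus is $s_{n}[\mathbb{C}P^{n}]=p=n+1$ by Remark~\ref{CP^n}; this choice also cleanly covers the small case $n=2$, where Theorem~\ref{p-1} does not apply because it requires $p\ge 5$. If instead $m\ge 2$, then Theorem~\ref{p^m-1} supplies a generator directly. This exhausts the even prime-power dimensions.

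The remaining and principal case is $n$ even with $n+1$ not a prime power; here $n\le 100\,000$, so Proposition~\ref{numcomp} furnishes an $\varepsilon\in\{2,\dots,n-1\}$ with $\gcd(R_{n}(\varepsilon),n+1)=1$. I then carry out the construction indicated after Conjecture~\ref{numcon}: choose an integer $b$ representing the inverse of $R_{n}(\varepsilon)$ in $\mathbb{Z}_{n+1}^{\times}$, with sign chosen so that $bR_{n}(\varepsilon)>0$. By Proposition~\ref{sn}, $s_{n}[Y_{n}^{\varepsilon}(1,b)]=bR_{n}(\varepsilon)$ is positive and congruent to $1$ modulo $n+1$. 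Since $n$ is even, each blow-up at a torus-fixed point lowers the Milnor genus by exactly $n+1$ by Proposition~\ref{blowupMil}, while leaving the variety smooth, projective, and toric by Proposition~\ref{blowupsmooth}; hence a sufficiently long sequence of such blow-ups yields a smooth projective toric variety of Milnor genus exactly $1$, which represents $\alpha_{n}$ by \eqref{eq:gen}. The one genuine obstacle lies entirely in this case and is absorbed into Proposition~\ref{numcomp}: the existence of a suitable $\varepsilon$ is precisely the content of Conjecture~\ref{numcon}, established here only by a finite machine verification over the stated range rather than by a general proof. Every other step is assembly of prior results, so the difficulty of the corollary resides not in its logical structure but in this finite $\gcd$ computation.
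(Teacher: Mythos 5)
Your proposal is correct and follows exactly the route the paper intends: the corollary is assembled from Theorem \ref{odd} for odd $n$, Theorems \ref{p-1}/\ref{p^m-1} (with $\mathbb{C}P^{n}$ covering $n=2$) for even $n$ with $n+1$ a prime power, and Proposition \ref{numcomp} together with the blow-up construction sketched after Conjecture \ref{numcon} for the remaining even $n\le100\,000$. Your observation that the endpoint $n=100\,001$ is odd, which reconciles the two bounds, is precisely the right reading of the paper's statement.
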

Not only is there an integer $\varepsilon$ satisfying Conjecture
\ref{numcon} in dimensions $n\le100\,000$, but the number of such
$\varepsilon$ seems to increase in general as $n$ increases. Figure
\ref{fig:graph} displays this trend. It shows the number of $\varepsilon$
satisfying Conjecture \ref{numcon} for each even $n\le10\,000$ such
that $n+1$ is not a prime power. In order to verify the conjecture,
only one such $\varepsilon$ needs to exist for any given $n$. It
seems likely that the trend in the graph would continue for larger
$n$, making it doubtful that there exists some large complex dimension
$n$ for which there is no corresponding $\varepsilon$ that satisfies
the conjecture.

\begin{figure}
\begin{center}\includegraphics[scale=0.65]{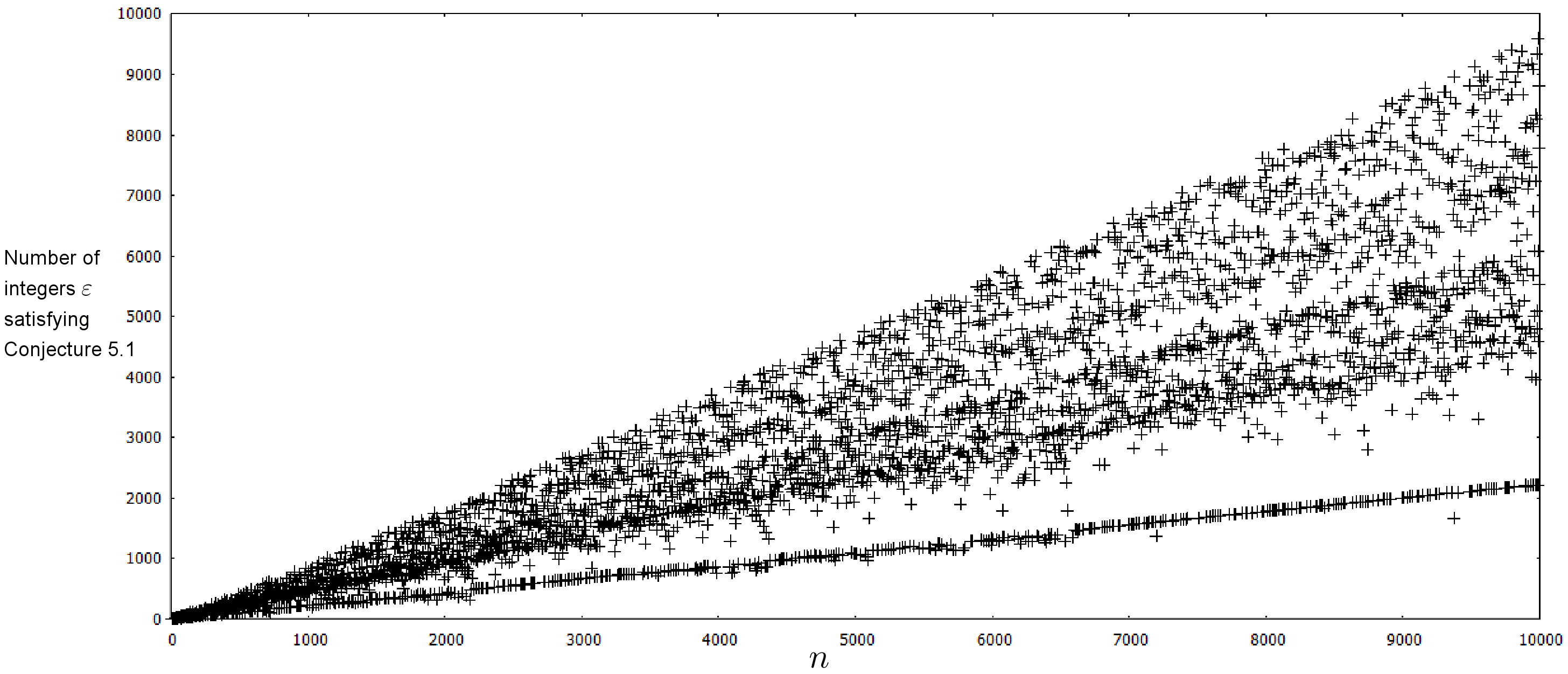}\end{center}

\caption{The number of integers $\varepsilon$ satisfying Conjecture \ref{numcon}
for $n$ up to $10\,000$\label{fig:graph}}
\end{figure}

\section{Conclusion}

The evidence supporting Conjecture \ref{numcon} makes it seem very
likely that a smooth projective toric variety can be chosen to represent
the polynomial generators of the complex cobordism ring in each dimension.
Finding a proof of Conjecture \ref{numcon} may be the easiest way
to verify this. Unfortunately, the techniques that have been used
to prove the existence of smooth projective toric variety polynomial
generators still do not result in very convenient choices (see Theorems
\ref{p^m-1} and \ref{odd} and also Conjecture \ref{numcon}).

Remark \ref{CP^n} and Figure \ref{fig:graph} suggest that there
may be many non-cobordant choices for smooth projective toric variety
polynomial generators in a given dimension. It therefore seems worthwhile
to search for other smooth projective toric varieties for which, like
the $Y_{n}^{\varepsilon}\left(a,b\right)$, the Milnor genus is straight-forward
to compute, and there is a large variety of possible values for these
Milnor genera. Perhaps this would lead to the discovery of smooth
projective toric varieties that can be chosen as polynomial generator
representatives that are also easy to describe and work with.

Recall that the varieties $Y_{n}^{\varepsilon}\left(a,b\right)$ consist
of a stack of two $\mathbb{C}P^{i}$-bundles over some $\mathbb{C}P^{k}$.
As an example of the possible diversity of toric polynomial generators,
we could instead consider certain smooth projective toric varieties
classified by Kleinschmidt that can be viewed as $\mathbb{C}P^{k}$-bundles
over $\mathbb{C}P^{n-k}$ \cite{Kleinschmidt1988}. At the level of
fans, these varieties correspond to fans which have exactly two more
generating rays than the dimension. These provide additional, often
non-cobordant examples of smooth projective toric variety polynomial
generators in many dimensions \cite[Chapter 5]{Wilfong2013}.

There are many other examples of smooth projective toric varieties
that may also be useful in finding complex cobordism polynomial generators.
For example, Batyrev classified all smooth projective toric varieties
corresponding to fans with three more generating rays than the dimension
\cite{Batyrev1991}. These display a convenient structure which facilitates
computations of Milnor genera. Cayley polytopes (see \cite{Dickenstein2009}
for details) also display a simple structure which facilitates computing
the Milnor genus of the corresponding toric varieties. More refined
techniques for computing the Milnor genera of these smooth projective
toric varieties could lead to the discovery of convenient and easy
to describe complex cobordism polynomial generators among them.

\bibliographystyle{spmpsci}
\bibliography{Bibliography}

\begin{thebibliography}{10}

\bibitem{Batyrev1991}
Victor~V. Batyrev.
\newblock On the classification of smooth projective toric varieties.
\newblock {\em Tohoku Mathematical Journal}, 43:569--585, 1991.

\bibitem{Buchstaber2002}
Victor~M. Buchstaber and Taras~E. Panov.
\newblock {\em Torus Actions and Their Applications in Topology and
  Combinatorics}, volume~24 of {\em University Lecture Series}.
\newblock American Mathematical Society, Providence, RI, 2002.

\bibitem{Buchstaber1998}
V.~M. Buchstaber and N.~Ray.
\newblock Toric manifolds and complex cobordisms.
\newblock {\em Russian Mathematical Surveys}, 53(2):371--373, 1998.

\bibitem{Buchstaber2001}
Victor~M Buchstaber and Nigel Ray.
\newblock Tangential structures on toric manifolds, and connected sums of
  polytopes.
\newblock {\em International Mathematical Research Notices}, 4:193--219, 2001.

\bibitem{Cox2011}
David~A. Cox, John~B. Little, and Henry~K. Schenck.
\newblock {\em Toric Varieties}, volume 124 of {\em Graduate Studies in
  Mathematics}.
\newblock American Mathematical Society, 2011.

\bibitem{Danilov1978}
V.~I. Danilov.
\newblock The geometry of toric varieties.
\newblock {\em Russian Mathematical Surveys}, 33(2):97--154, 1978.

\bibitem{Dickenstein2009}
Alicia Dickenstein, Sandra~Di Rocco, and Ragni Piene.
\newblock Classifying smooth lattice polytopes via toric fibrations.
\newblock {\em Advances in Mathematics}, 222(1):240--254, 2009.

\bibitem{Fulton1993}
William Fulton.
\newblock {\em Introduction to Toric Varieties}.
\newblock Number 131 in Annals of Mathematical Studies. Princeton University
  Press, 1993.

\bibitem{Griffiths1978}
Phillip Griffiths and Joseph Harris.
\newblock {\em Principles of Algebraic Geometry}.
\newblock Wiley, New York, 1978.

\bibitem{Huybrechts2005}
Daniel Huybrechts.
\newblock {\em Complex Geometry: An Introduction}.
\newblock Springer, 2005.

\bibitem{Johnston2004}
Bryan Johnston
\newblock The values of the {M}ilnor genus on smooth projective connected complex varieties.
\newblock {\em Topology and its Applications}, 138:189--206, 2004.

\bibitem{Jurkiewicz1985}
J.~Jurkiewicz.
\newblock Torus embeddings, polyhedra, k*-actions and homology.
\newblock {\em Dissertationes Mathematicae}, 236, 1985.

\bibitem{Kleinschmidt1988}
Peter Kleinschmidt.
\newblock A classification of toric varieties with few generators.
\newblock {\em Aequationes Mathematicae}, 35:254--266, 1988.

\bibitem{Novikov1960}
S.~P. Novikov.
\newblock Some problems in the topology of manifolds connected with the theory
  of {T}hom spaces.
\newblock {\em Doklady Akademii Nauk SSSR}, 132(5):1031--1034, 1960.

\bibitem{Stong1968}
Robert~E. Stong.
\newblock {\em Notes on Cobordism Theory}.
\newblock Princeton University Press, 1968.

\bibitem{Thom1995}
Ren\'e Thom.
\newblock Travaux de {M}ilnor sur le cobordisme.
\newblock {\em Séminaire Bourbaki}, 5(180):169--177, 1995.

\bibitem{Wilfong2013}
Andrew Wilfong.
\newblock {\em Toric Varieties and Cobordism}.
\newblock PhD thesis, University of Kentucky, 2013.

\end{thebibliography}

\end{document}